\newtheorem{theorem}{Theorem}%  meant for continuous numbers
\newtheorem{proposition}[theorem]{Proposition}%
\newtheorem{remark}{Remark}%
\newtheorem{definition}{Definition}%
\newtheorem{corollary}{Corollary}
\def\lt{\left}
\def\rt{\right}
\def\d{d}
\def\vp{\varphi}
\def\be{\mathbb{E}}
\def\bn{\mathbb{N}}
\def\cp{\mathcal{P}}
\def\cq{\mathcal{Q}}
\def\cf{\mathcal{F}}
\def\cb{\mathcal{B}}
\def\cm{\mathcal{M}}
\def\cf{\mathcal{F}}
\def\cx{\mathcal{X}}
\def\be{{\mathbb{E}}}
\begin{document}
\begin{frontmatter}
%% Title, authors and addresses
%% use the tnoteref command within \title for footnotes;
%% use the tnotetext command for the associated footnote;
%% use the fnref command within \author or \address for footnotes;
%% use the fntext command for the associated footnote;
%% use the corref command within \author for corresponding author footnotes;
%% use the cortext command for the associated footnote;
%% use the ead command for the email address,
%% and the form \ead[url] for the home page:
%%
%% \title{Title\tnoteref{label1}}
%% \tnotetext[label1]{}
%% \author{Name\corref{cor1}\fnref{label2}}
%% \ead{email address}
%% \ead[url]{home page}
%% \fntext[label2]{}
%% \cortext[cor1]{}
%% \address{Address\fnref{label3}}
%% \fntext[label3]{}
\title{Generalized Divergence Measures and Weak Convergence for the Sets of Probability Measures\footnote{This work is supported by NSF of China (No.12326603 and No.11601281) and the Qilu Young Scholars Program of Shandong University.}}
%% use optional labels to link authors explicitly to addresses:
%% \author[label1,label2]{<author name>}
%% \address[label1]{<address>}
%% \address[label2]{<address>}
\author[1,2]{Xinpeng Li}
%\ead{lixinpeng@sdu.edu.cn}
\author[1]{Miao Yu}
\ead{yu-miao@mail.sdu.edu.cn}
\address[1]{Research Center for Mathematics and Interdisciplinary Sciences, \\Shandong University, 266237, Qingdao, China}
\address[2]{Frontiers Science Center for Nonlinear Expectations (Ministry of Education), \\Shandong University, 266237, Qingdao, China}
\begin{abstract}
This paper extends the asymmetric Kullback–Leibler divergence and symmetric Jensen–Shannon divergence from two probability measures to the case of two sets of probability measures. We establish some fundamental properties of these generalized divergences, including a duality formula and a Pinsker-type inequality. Furthermore, convergence results are derived for both the generalized asymmetric and symmetric divergences, as well as for weak convergence under sublinear expectations.
\end{abstract}
\begin{keyword}
Jensen-Shannon divergence \sep Kullback–Leibler divergence\sep Sublinear expectations\sep Weak convergence
\end{keyword}
\end{frontmatter}

\section{Introduction}
In classical probability theory, for two continuous random variables with probability densities $p(x)$ and $q(x)$, the Kullback-Leibler divergence (KL divergence) from $p(x)$ to $q(x)$ is defined as follows:
$$\text{KL}(p\parallel q)=\int p(x)\text{log}\lt(\frac{p(x)}{q(x)}\rt)\d x,$$
where $\text{log}$ denotes the natural logarithm. The definition for discrete random variables is analogous. A key characteristic of the KL divergence is its asymmetry, i.e., $\text{KL}(p\parallel q) \neq \text{KL}(q \parallel p)$ in general. To address this, \cite{lin1991divergence} introduced a symmetric divergence known as the Jensen-Shannon divergence (JS divergence), which is defined in the following form: 
$$\text{JS}(p,q)=\frac{1}{2}\text{KL}\lt(p\parallel\frac{p+q}{2}\rt)+\frac{1}{2}\text{KL}\lt(q\parallel\frac{p+q}{2}\rt).$$

As fundamental measures of dissimilarity between probability distributions, the KL and JS divergences possess a range of important theoretical properties(see \cite{lin1991divergence,cover2006elements}) and are widely used in applications such as variational inference (see \cite{blei2017variational}) and model selection (see \cite{schennach2017simple}). However, these applications typically rely on divergences between two probability distributions, where one distribution is precisely specified. In practice, random variables of interest often reside in uncertain environments such that their distributions belong to sets of probability measures. This naturally raises a fundamental question: how can we quantify the dissimilarity between two such sets? To address this question, we first define a generalized KL divergence using the $\sup\inf$ operator and then define a generalized JS divergence via KL divergence. Furthermore, our analysis extends to weak convergence under sublinear expectations—a framework pioneered by Peng (see \cite{peng2019}), which provides a powerful tool for handling such distributional uncertainty. Under this framework, a random variable $X$ is associated with a family of distributions $\{F_X(\theta,\cdot)\}_{\theta\in\Theta}$, reflecting its distributional uncertainty. These generalized divergences allow us to quantify the discrepancy between two such families corresponding to random variables $X$ and $Y$. Moreover, in scenarios where prior information is limited, sublinear expectation theory offers a principled approach, as evidenced by its growing use in data analysis (see \cite{JPY}) and financial risk management (see \cite{pei2021worst, PYY}).

The remainder of this paper is structured as follows. Section 2 reviews the definitions and key properties of the KL divergence, JS divergence, and total variation distance, and then extends these concepts to the case of sets of probability measures. Section 3 establishes a duality formula for the generalized KL divergence and derives fundamental properties of these generalized divergences. Section 4 presents a generalized Pinsker’s inequality and compares the strength of convergence under various metrics, including weak convergence in the context of sublinear expectations.

\section{Preliminaries}
The KL divergence, first introduced by \cite{kullback1951information}, is formally defined on a measurable space $(\cx, \cf)$.
% The KL divergence was first introduced by \cite{kullback1951information} from a statistical 
% perspective, defined in a symmetric form. However, subsequent research developments often 
% considered asymmetric forms. This paper also examines the asymmetric form.  Let $\mu$ and $\nu$ %be two probability measures on the fixed measurable space $(\cx,\cf)$.

\begin{definition}\label{CKL}
    For two probability measures $\mu$ and $\nu$ on $(\cx,\cf)$, the KL divergence is defined as follows:
\begin{align*}
    \normalfont
    \text{KL}(\mu\parallel\nu)
    &= \begin{cases} 
    \displaystyle\int_\cx \log\left(\frac{\d\mu}{\d\nu}(x)\right)\mu(\d x) 
    &\quad \text{if } \mu\ll\nu \\[6pt]
    +\infty 
    &\quad \text{otherwise}
    \end{cases}
\end{align*}
where $\mu\ll\nu$ means that $\mu$ is absolutely continuous with respect to $\nu$.
\end{definition}
We note that $\textup{KL}(\mu\parallel\nu)\geq0$ and $\textup{KL}(\mu\parallel\nu)=0$  if and only if $\mu=\nu$. Furthermore, $\textup{KL}(\cdot,\cdot)$ is a convex function (see  \cite{van2014renyi}).   The following Pinsker's inequality and duality formulas can be found in \cite{boucheron2013} and \cite{Donsker1983AsymptoticEO}.

\begin{proposition}[Pinsker's inequality]\label{pinsker}
    %\normalfont\text{(Pinsker's inequality)}
    For two probability measures $\mu$ and $\nu$ on $(\cx,\cf)$, we have        
    $$\textup{KL}(\mu\parallel\nu)\geq 2V^2(\mu,\nu),$$
    where $V(\mu,\nu)$ is the total variation distance between two probability measures $\mu$ and $\nu$, i.e.,     
    $V(\mu,\nu)=\sup_{A\in\cf}\vert\mu(A)-\nu(A)\vert.$ 
\end{proposition}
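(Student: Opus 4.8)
The plan is to reduce the claimed inequality to a one-dimensional estimate by exploiting the fact that the KL divergence cannot increase under coarsening of the underlying $\sigma$-algebra, and then to settle the resulting scalar inequality by elementary calculus.

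First I would dispose of the trivial case: if $\mu\not\ll\nu$ then $\textup{KL}(\mu\parallel\nu)=+\infty$ and there is nothing to prove, so assume $\mu\ll\nu$ and set $g=\d\mu/\d\nu$. Fix an arbitrary $A\in\cf$ and write $p=\mu(A)$, $q=\nu(A)$. The key step is a binary reduction: applying Jensen's inequality to the convex function $\vp(t)=t\log t$ against the normalized restriction of $\nu$ to $A$ (and separately to $A^c$) gives
$$\int_A\log g\,\d\mu=\int_A\vp(g)\,\d\nu\ge \nu(A)\,\vp\!\lt(\tfrac{\mu(A)}{\nu(A)}\rt)=p\log\tfrac{p}{q},\qquad \int_{A^c}\log g\,\d\mu\ge (1-p)\log\tfrac{1-p}{1-q},$$
and summing yields $\textup{KL}(\mu\parallel\nu)\ge p\log\frac{p}{q}+(1-p)\log\frac{1-p}{1-q}=:h(p,q)$.

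It then remains to prove the scalar inequality $h(p,q)\ge 2(p-q)^2$ for all $p,q\in[0,1]$. I would do this by fixing $p$ and studying $\psi(q):=h(p,q)-2(p-q)^2$ as a function of $q$. A direct computation gives $\psi'(q)=(p-q)\bigl(4-\tfrac{1}{q(1-q)}\bigr)$, and since $q(1-q)\le\tfrac14$ the second factor is nonpositive; hence $\psi$ decreases on $(0,p)$ and increases on $(p,1)$, so it attains its minimum at $q=p$, where $\psi(p)=0$. Consequently $\textup{KL}(\mu\parallel\nu)\ge h(p,q)\ge 2(p-q)^2=2|\mu(A)-\nu(A)|^2$, and taking the supremum over $A\in\cf$ finishes the proof.

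The routine calculus in the last step is straightforward; the point that needs the most care is the binary reduction, specifically the degenerate situations in which $\nu(A)$ or $\nu(A^c)$ vanishes or $\mu(A)\in\{0,1\}$. These have to be handled separately, using the convention $0\log 0=0$ together with the absolute continuity $\mu\ll\nu$ (which forces $\mu(A)=0$ whenever $\nu(A)=0$), so that the corresponding terms are well defined and the inequality continues to hold.
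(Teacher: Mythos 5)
Your proof is correct and complete: the reduction to the two-point (Bernoulli) case via Jensen's inequality applied to $t\log t$ on $A$ and $A^c$, followed by the calculus verification that $p\log\frac{p}{q}+(1-p)\log\frac{1-p}{1-q}\geq 2(p-q)^2$, is the standard argument for Pinsker's inequality, and your handling of the degenerate cases ($\nu(A)=0$ or $p\in\{0,1\}$) is the right precaution. The paper itself does not prove this proposition but simply cites the literature, and the references it points to prove it in essentially the same way you do, so there is nothing to contrast.
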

  
\begin{proposition}[Duality formulas]\label{dual}
    %\normalfont\text{(Duality formula for KL-divergence)}
    For two probability measures $\mu$ and $\nu$ on $(\cx,\cf)$, we have  
    \begin{itemize}
        \item[(i)] $\textup{KL}(\mu\parallel\nu)=\sup_{\vp\in L^\infty(\cx)}\lt\{E_\mu[\vp]-\textup{log}\lt(E_\nu[e^\vp]\rt)\rt\},$
        \item[(ii)] $V(\mu,\nu)=\frac{1}{2}\sup_{\vp\in L^\infty(\cx),\Vert \vp\Vert_\infty\leq 1}\vert E_\mu[\vp]-E_{\nu}[\vp]\vert.$
    \end{itemize}
where $L^\infty(\cx)$ denotes the space of all bounded $\cf$-measurable functions, $E_\mu$ and $E_\nu$ are the expectations introduced by $\mu$ and $\nu$, $\Vert\vp\Vert_\infty:=\sup_{x\in\cx}\vert\vp(x)\vert$.
\end{proposition}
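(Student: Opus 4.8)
The plan is to prove the two identities separately, in each case by establishing the two matching inequalities.

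\textbf{Part (i)} is the Donsker--Varadhan variational formula. For the inequality ``$\geq$'', I would first dispose of the case $\mu\not\ll\nu$, where $\textup{KL}(\mu\parallel\nu)=+\infty$ and there is nothing to prove. When $\mu\ll\nu$, fix $\vp\in L^\infty(\cx)$ and introduce the tilted probability measure $\d\nu_\vp=\frac{e^\vp}{E_\nu[e^\vp]}\,\d\nu$, which is equivalent to $\nu$ since $e^\vp$ is bounded away from $0$ and $\infty$; computing the Radon--Nikodym derivative $\frac{\d\mu}{\d\nu_\vp}=\frac{\d\mu}{\d\nu}\cdot\frac{E_\nu[e^\vp]}{e^\vp}$ gives the identity
\[
\textup{KL}(\mu\parallel\nu_\vp)=\textup{KL}(\mu\parallel\nu)-E_\mu[\vp]+\log E_\nu[e^\vp]
\]
in $[0,+\infty]$. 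Since $\textup{KL}(\mu\parallel\nu_\vp)\geq0$, this yields $E_\mu[\vp]-\log E_\nu[e^\vp]\leq\textup{KL}(\mu\parallel\nu)$, and taking the supremum over $\vp$ proves ``$\geq$''. For ``$\leq$'', if $\mu\not\ll\nu$ I would pick $A\in\cf$ with $\nu(A)=0<\mu(A)$ and test with $\vp=c\,\mathbf{1}_A$, so that $E_\mu[\vp]-\log E_\nu[e^\vp]=c\,\mu(A)\to+\infty$; if $\mu\ll\nu$, the formal optimizer is $\vp=\log\frac{\d\mu}{\d\nu}$, for which the bracketed expression equals $\textup{KL}(\mu\parallel\nu)$ because $E_\nu[e^\vp]=E_\nu\bigl[\frac{\d\mu}{\d\nu}\bigr]=1$, and since this $\vp$ need not be bounded I would approximate it by the truncations $\vp_n=\bigl(\log\frac{\d\mu}{\d\nu}\bigr)\wedge n\vee(-n)\in L^\infty(\cx)$ and pass to the limit.

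\textbf{Part (ii)} is more elementary. I would fix the dominating measure $\lambda=\mu+\nu$, write $p=\frac{\d\mu}{\d\lambda}$, $q=\frac{\d\nu}{\d\lambda}$, and set $B=\{p>q\}$. Because $\int(p-q)\,\d\lambda=\mu(\cx)-\nu(\cx)=0$, the positive and negative parts of $p-q$ have the same integral, so for every $A\in\cf$ one gets $|\mu(A)-\nu(A)|=\bigl|\int_A(p-q)\,\d\lambda\bigr|\leq\int(p-q)^+\,\d\lambda=\mu(B)-\nu(B)$, with equality at $A=B$; hence $V(\mu,\nu)=\mu(B)-\nu(B)=\frac{1}{2}\int|p-q|\,\d\lambda$. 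For any $\vp$ with $\Vert\vp\Vert_\infty\leq1$ this gives $|E_\mu[\vp]-E_\nu[\vp]|=\bigl|\int\vp\,(p-q)\,\d\lambda\bigr|\leq\int|p-q|\,\d\lambda=2V(\mu,\nu)$, i.e. $\frac{1}{2}\sup_\vp|E_\mu[\vp]-E_\nu[\vp]|\leq V(\mu,\nu)$; conversely, testing with $\vp=\mathbf{1}_B-\mathbf{1}_{B^c}$, which has $\Vert\vp\Vert_\infty=1$, yields $E_\mu[\vp]-E_\nu[\vp]=2\bigl(\mu(B)-\nu(B)\bigr)=2V(\mu,\nu)$, giving the reverse inequality.

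The main obstacle is the limiting step in part (i): one must check that $E_\mu[\vp_n]\to\textup{KL}(\mu\parallel\nu)$ and $E_\nu[e^{\vp_n}]\to1$ (so that $\log E_\nu[e^{\vp_n}]\to0$), including the case $\textup{KL}(\mu\parallel\nu)=+\infty$. This is handled by splitting $\log\frac{\d\mu}{\d\nu}$ into its positive and negative parts and applying the monotone convergence theorem to each --- the negative part being automatically $\mu$-integrable since $x\mapsto x\log(1/x)$ is bounded on $(0,1]$ --- together with dominated convergence for $E_\nu[e^{\vp_n}]$ on $\{\frac{\d\mu}{\d\nu}<1\}$ and monotone convergence on $\{\frac{\d\mu}{\d\nu}\geq1\}$. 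Everything else reduces to routine computation.
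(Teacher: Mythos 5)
Your proof is correct. Note, however, that the paper does not prove Proposition \ref{dual} at all: it is stated as a known result with citations to Boucheron et al.\ and Donsker--Varadhan, so there is no in-paper argument to compare against. What you supply is the standard self-contained proof: for (i), the exponential tilting $\d\nu_\vp=\frac{e^\vp}{E_\nu[e^\vp]}\,\d\nu$ together with nonnegativity of $\textup{KL}(\mu\parallel\nu_\vp)$ for the upper bound on the variational expression, and truncation of the formal optimizer $\log\frac{\d\mu}{\d\nu}$ for the lower bound; for (ii), the Hahn-type set $B=\{p>q\}$ and the test function $\mathbf{1}_B-\mathbf{1}_{B^c}$. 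The delicate points are exactly the ones you flag and resolve correctly: the case $\mu\not\ll\nu$ handled by $\vp=c\,\mathbf{1}_A$ (where $E_\nu[e^\vp]=1$, so the expression is $c\,\mu(A)\to+\infty$), the uniform bound $E_\mu[(\log\frac{\d\mu}{\d\nu})^-]\le e^{-1}$ guaranteeing the KL integral is always well defined in $[0,+\infty]$, and the monotone/dominated convergence split showing $E_\nu[e^{\vp_n}]\to1$ and $E_\mu[\vp_n]\to\textup{KL}(\mu\parallel\nu)$ even when the latter is infinite. One cosmetic remark: in the paper's statement the supremum ranges over all bounded measurable $\vp$, and your truncations $\vp_n$ indeed lie in that class, so no continuity approximation (as in Remark \ref{dualremark}) is needed here.
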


\begin{remark}\label{dualremark}
If $\cx$ is a Polish space with the Borel $\sigma$-algebra $\cb(\cx)$, then replacing $L^\infty(\cx)$ by $C_b(\cx)$ gives the same supremum in Proposition \ref{dual} since measurable functions can be approximated by bounded continuous functions.
\end{remark}
By duality formulas, we give a simple proof for the lower semi-continuity of the KL divergence and the total variation distance, which is different from \cite{van2014renyi}.

\begin{proposition}
     If $\cx$ is a Polish space, $\textup{KL}(\mu,\nu)$ and $V(\mu,\nu)$ are lower semi-continuous functions of the pair $(\mu,\nu)$ with respect to the weak convergence.   
\end{proposition}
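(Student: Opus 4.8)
The plan is to realize both $\textup{KL}$ and $V$ as pointwise suprema of families of functionals that are continuous in the pair $(\mu,\nu)$ for weak convergence, and then invoke the elementary fact that a pointwise supremum of lower semi-continuous (in particular, continuous) functions is lower semi-continuous.

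First, since $\cx$ is Polish, Remark \ref{dualremark} lets me rewrite Proposition \ref{dual}(i) as
$$\textup{KL}(\mu\parallel\nu)=\sup_{\vp\in C_b(\cx)}\lt\{E_\mu[\vp]-\log\lt(E_\nu[e^\vp]\rt)\rt\}.$$
Fix $\vp\in C_b(\cx)$. If $\mu_n\to\mu$ and $\nu_n\to\nu$ weakly, then $E_{\mu_n}[\vp]\to E_\mu[\vp]$ because $\vp\in C_b(\cx)$; moreover $e^\vp\in C_b(\cx)$ since $\vp$ is bounded, so $E_{\nu_n}[e^\vp]\to E_\nu[e^\vp]$, and because $E_\nu[e^\vp]\geq e^{-\Vert\vp\Vert_\infty}>0$ the logarithm is continuous at the relevant values. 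Hence $(\mu,\nu)\mapsto E_\mu[\vp]-\log(E_\nu[e^\vp])$ is continuous for weak convergence of the pair, and taking the supremum over $\vp\in C_b(\cx)$ produces a lower semi-continuous function, which is exactly $\textup{KL}(\cdot\parallel\cdot)$. For the total variation distance, Proposition \ref{dual}(ii) together with Remark \ref{dualremark} gives
$$V(\mu,\nu)=\frac{1}{2}\sup_{\vp\in C_b(\cx),\,\Vert\vp\Vert_\infty\leq1}\vert E_\mu[\vp]-E_\nu[\vp]\vert,$$
and for each admissible $\vp$ the map $(\mu,\nu)\mapsto\vert E_\mu[\vp]-E_\nu[\vp]\vert$ is continuous for weak convergence of the pair, being the absolute value of the difference of the two continuous functionals $\mu\mapsto E_\mu[\vp]$ and $\nu\mapsto E_\nu[\vp]$; so $V$ is again a supremum of continuous functions and therefore lower semi-continuous.

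Since the space of Borel probability measures on a Polish space is metrizable for the weak topology (e.g.\ by the Prokhorov metric), lower semi-continuity may equivalently be checked along sequences $(\mu_n,\nu_n)\to(\mu,\nu)$, which is the form in which the statement is typically applied. I do not expect a genuine obstacle here: the only two points needing a line of care are the passage from bounded measurable to bounded continuous test functions (precisely Remark \ref{dualremark}, which is where the Polish assumption enters) and the uniform lower bound $E_\nu[e^\vp]\geq e^{-\Vert\vp\Vert_\infty}$ that keeps the logarithm continuous; everything else is the standard "supremum of continuous functions is lower semi-continuous" argument.
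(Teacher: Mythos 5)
Your proof is correct and follows essentially the same route as the paper: both use the duality formulas restricted to $C_b(\cx)$ test functions (via Remark \ref{dualremark}) and the fact that a supremum of weakly continuous functionals is lower semi-continuous, the paper merely phrasing this sequentially via $\liminf$. Your extra remarks on the lower bound $E_\nu[e^\vp]\geq e^{-\Vert\vp\Vert_\infty}$ and on metrizability are sensible touches of care but do not change the argument.
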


\begin{proof}
    Let $\{\mu_n\}_{n\geq 1}$ and $\{\nu_n\}_{n\geq 1}$ be sequences of probability measures that weakly converge to $\mu$ and $\nu$, respectively. For each $\vp\in\mathcal{C}_b(\cx)$, we have 
    \begin{align*}
        {\lim\inf}_{n\to\infty}\textup{KL}(\mu_n\parallel\nu_n)&\geq{\lim\inf}_{n\to\infty}\{E_{\mu_n}[\vp]-\textup{log}E_{\nu_n}[e^\vp]\}\\
        &=E_\mu[\vp]-\textup{log}E_\nu[\vp],
    \end{align*}
    then we have $\lim\inf_{n\to\infty}\textup{KL}(\mu_n\parallel\nu_n)\geq\textup{KL}(\mu\parallel\nu)$. The case of $V(\mu,\nu)$ can be proved similarly.  
\end{proof}
 
The notion of KL divergence is asymmetric and the probability measures involved in KL divergence should satisfy the condition of absolute continuity if the divergence is finite. To overcome the previous restrictions, \cite{lin1991divergence} proposed a symmetric divergence, called the Jensen-Shannon divergence. 

\begin{definition}
    For two probability measures $\mu$ and $\nu$ on $(\cx,\cf)$, the JS divergence is defined as follows:
    \normalfont  
    $$\text{JS}(\mu,\nu)=\frac{1}{2}\text{KL}\lt(\mu\parallel\frac{\mu+\nu}{2}\rt)+\frac{1}{2}\text{KL}\lt(\nu\parallel\frac{\mu+\nu}{2}\rt).$$
\end{definition}
We note that this definition does not require absolute continuity, since $\mu,\nu\ll\frac{\mu+\nu}{2}$ is always true. It can be verified that  $0\leq\textup{JS}(\mu,\nu)\leq \textup{log}2$.

% As a bridge connecting the strength of convergence of probability measures under different metrics, the total variation distance is half the $L_1$-distance (see \cite{boucheron2013}), we can immediately derive the dual formula for the total variation distance. 

% \begin{proposition}[Duality formula for total variation distance]\label{dual-v}    
%     %\normalfont\text{(Duality formula for total variation distance)}    
%     For two probability measures $\mu$ and $\nu$ on $(\cx,\cf)$, we have       
%     $$V(\mu,\nu)=\frac{1}{2}\sup_{\Vert f\Vert_\infty\leq 1}\vert E_\mu[f]-E_{\nu}[f]\vert=\frac{1}{2}\sup_{\Vert f\Vert_\infty\leq 1}(E_\mu[f]-E_{\nu}[f]),$$
%     where $f$ is a $\cf$-measurable function and $\Vert f\Vert_\infty=\sup_{x\in\cx}\vert f(x)\vert$. 
% \end{proposition}

% \begin{remark}\label{dualremark}
% If $\cx$ is a Polish space and $\cb(\cx)$ is the Borel $\sigma$-algebra, restricting $f$ to be a continuous function with $\Vert f\Vert_\infty\leq 1$ gives the same supremum in Proposition \ref{dual-v}. 
% \end{remark}

Now we generalize the above notions from two probability measures to the context of two sets of probability measures. Let $\mathcal{P}(\cx)$ denote the collection of all sets of probability measures on the measurable space $(\cx,\cf)$. We give the definitions of generalized KL and JS divergences as follows.

\begin{definition}\label{KL}
    Let $\cp,\cq\in\cp(\cx)$, we define the generalized KL divergence between $\cp$ and $\cq$ in the following form:
    \begin{align}\label{KL}
        \textup{KL}(\cp\parallel\cq)=\sup_{\mu\in\cp}\inf_{\nu\in\cq}\textup{KL}(\mu\parallel\nu).
    \end{align}
    %where $\textup{KL}(\mu\parallel\nu)$ is the classical KL divergence in Definition \ref{CKL}.
\end{definition}

% \begin{remark}
% We consider the generalized KL divergence to be a quantity that measures the difference between two sets of probability measures, and it is natural to expect the corresponding generalized KL divergence to be as small as possible when the difference between the two sets of probability measures is relatively small. It is clear that 
% $$\normalfont\inf_{\nu\in\cq}\sup_{\mu\in\cp}\text{KL}(\mu\parallel \nu)\geq \sup_{\mu\in\cp}\inf_{\nu\in\cq}\text{KL}(\mu\parallel\nu).$$
% Thus, we prefer the $\sup\inf$ operator in Definition \ref{KL}. 
% \end{remark}

\begin{remark}
 It is clear that KL divergence is not symmetric in general, i.e.,
 \normalfont
 $$\text{KL}(\cp\parallel\cq)\neq \text{KL}(\cq\parallel\cp).$$ 
\end{remark}
\begin{remark}\label{remark3}
\cite{yang2018robust} proposed the robust KL divergence between two sets $\cp$ and $\cq$ defined as $\normalfont{\text{KL}_*}(\cp\parallel\cq):=\inf_{\mu\in\cp,\nu\in\cq}\text{KL}(\mu\parallel\nu)$,
and focused on the case of $\normalfont{\text{KL}_*}(\{\mu\}\parallel\cq)$ to study the universal hypothesis testing for continuous distributions. In this case, $\normalfont\text{KL}(\{\mu\}\parallel\cq)={\text{KL}_*}(\{\mu\}\parallel\cq)$.  
\end{remark}

% The following example shows that the generalized KL divergence can be used to characterize distributional uncertainty.
% \begin{example}
% For a nominal probability measure $\eta$ and $\rho\geq 0$, consider the following KL ball based on the classic KL divergence, which is commonly used in distributionally robust optimization (DRO), 
% $$\rho_\eta=\{\mu|\textup{KL}(\mu\parallel\eta)\leq \rho \}.$$
% We consider a special KL ball based on the generalized KL divergence, where the second variable is a single point set. Let $B_\eta$ be the largest set $\cp$ such that $\textup{KL}(\cp\parallel\{\eta\})=\rho$, i.e.,
% $$A_\eta=\{\cp|\textup{KL}(\cp\parallel\{\eta\})=\rho\}, \ B_\eta=\bigcup_{\cp\in A_\eta}\cp,$$ 
% then we have $\rho_\eta=B_\eta$. 

% In fact, by definition, we have $\textup{KL}(\rho_\eta\parallel\{\eta\})=\sup_{\mu\in\rho_\eta}\textup{KL}(\mu\parallel\eta)=\rho$, and thus $\rho_\eta\subset B_\eta$. For each $\mu\in B_\eta$, there exists $\cp\in A_\eta$ such that $\mu\in\cp$ and $\sup_{\nu\in\cp}\textup{KL}(\nu\parallel\eta)=\rho$, then we have $\textup{KL}(\mu\parallel\eta)\leq\rho$, that is, $\mu\in\rho_\eta$, thus $B_\eta\subset\rho_\eta$.   
% \end{example}

%Now we consider the generalized Jensen-Shannon divergence.

\begin{definition}
     Let $\cp,\cq\in\cp(\cx)$, we define the generalized JS divergence in the following form:    
$$\normalfont\text{JS}(\cp,\cq)=\frac{1}{2}\text{KL}(\cp\parallel \cm)+\frac{1}{2}\text{KL}(\cq\parallel \cm),$$
    where $\cm:=\frac{\cp+\cq}{2}$, i.e., $\cm=\lt\{\frac{\mu+\nu}{2}:\mu\in\cp,\nu\in\cq\rt\}$.
\end{definition}

\begin{remark}
JS divergence is symmetric, i.e., $\textup{JS}(\cp,\cq)=\textup{JS}(\cq,\cp)$. 
\end{remark}

We also define the generalized total variation metric between two sets of probability measures by a Hausdorff-type distance function. 
\begin{definition}
    Let $\cp,\cq\in\cp(\cx)$, we define the generalized total variation metric between $\cp$ and $\cq$ as follows: 
$$V(\cp,\cq)=\max\lt(\sup_{\mu\in\cp}\inf_{\nu\in\cq}V(\mu,\nu),    
\sup_{\nu\in\cq}\inf_{\mu\in\cp}V(\mu,\nu)\rt).$$
\end{definition}

\begin{remark}
We note that $V(\cp,\cq)$ is a semi-metric, and it will be a metric if we only   
consider weakly compact elements in $\cp(\cx)$ and $\cx$ is a Polish space.
\end{remark}

The notion of weak convergence for the sets of probability measures was introduced in \cite{peng2010tightness,peng2019} under the framework of sublinear expectations. For each $\mathcal{P}\in\mathcal{P}(\cx)$, we define the sublinear expectation $\be^\mathcal{P}[\cdot]$ on $L^1(\cx)$ as follows:
\begin{align}
\mathbb{E}^\mathcal{P}[X]:=\sup_{\mu\in\mathcal{P}}E_\mu[X], \ \forall X\in L^1(\cx),
\end{align}
where $L^1(\cx)$ is the space of all $\cf$-measurable functions with the finite mean.

% It is easy to check that $\mathbb{E}^\mathcal{P}[\cdot]$ satisfies the following properties: for $X,Y\in L^1(\cx)$,
% \begin{itemize}
%     \item[(i)] Monotonicity: $\mathbb{E}^\cp[X]\leq \mathbb{E}^\cp[Y], \ \forall X\leq Y;$
%     \item[(ii)] Constant preserving: $\mathbb{E}^\cp[c]=c,\ \forall c\in\br;$
%     \item[(iii)] Sub-additivity: $\mathbb{E}^\cp[X+Y]\leq\mathbb{E}^\cp[X]+\mathbb{E}^\cp[Y];$
%     \item[(iv)] Positive homogeneity: $\mathbb{E}^\cp[\lambda X]=\lambda\mathbb{E}^\cp[X],\ \forall\lambda\geq 0.$
% \end{itemize}
% Thus, $\mathbb{E}^\cp[\cdot]$ is a sublinear expectation on the space $(\cx,L^1(\cx))$.

\begin{definition}
A sequence $\{\mathbb{E}^{\cp_n}[\cdot]\}_{n=1}^\infty$ of sublinear expectations is said to converge weakly to  $\mathbb{E}^\cp[\cdot]$, if for each  $\vp\in C_b(\cx)$,
$$\lim_{n\to\infty}\mathbb{E}^{\cp_n}[\vp]=\mathbb{E}^\cp[\vp].$$
\end{definition}
In this case, we also say that $\{\cp_n\}_{n=1}^\infty$ weakly converges to $\cp$.

\cite{li2017generalized} introduced the generalized Wasserstein distance for two probability sets and demonstrated that weak convergence of sublinear expectations can be characterized by means of this distance. 

\begin{definition}
    Let $\cp,\cq\in\cp(\cx)$. For $p\geq 1$, we define the $p$-order Wasserstein metric between $\cp$ and $\cq$ in the following form:
    $$\mathcal{W}_p(\cp,\cq)=\max\lt(\sup_{\mu\in\cp}\inf_{\nu\in\cq}W_p(\mu,\nu),\sup_{\nu\in\cq}\inf_{\mu\in\cp}W_p(\mu,\nu)\rt),$$
    where $W_p(\mu,\nu)$ is the $p$-order Wasserstein distance between $\mu$ and $\nu$, i.e., 
    $$W_p(\mu,\nu)=\inf\lt\{E[d(X,Y)^p]^{\frac{1}{p}}|\text{law}(X)=\mu,\text{law}(Y)=\nu\rt\}.$$
\end{definition}

In the remainder of this paper, we always assume that $\cx$ is a Polish space with the metric $d$, $\cb(\cx)$ is a Borel $\sigma$-algebra on it. We restrict our attention to the case where the set $\cp$ of probability measures is weakly compact and convex. Furthermore, we denote by $\overline{\cp}(\cx)$ the collection of all these weakly compact and convex sets $\cp$.

\section{Properties of the generalized divergences}
We first generalize the duality formula for KL divergence (Proposition \ref{dual}) to the generalized KL divergence.
\begin{theorem}[Duality formula]
    %\normalfont\text{(Duality formula for the generalized KL-divergence)}
    Let $\cp,\cq\in\overline{\cp}(\cx)$, we have
    \normalfont
    \begin{align*}
        \text{KL}(\cp\parallel \cq)=\sup_{\vp\in C_b(\cx)}\lt\{\mathbb{E}^{\cp}[\vp]-\text{log}\lt(\mathbb{E}^{\cq}[e^\vp]\rt)\rt\}.
    \end{align*}
\end{theorem}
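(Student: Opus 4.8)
The plan is to combine the classical duality formula (Proposition \ref{dual}(i), in the form of Remark \ref{dualremark}) with a minimax argument. For $\mu\in\cp$, $\nu\in\cq$ and $\vp\in C_b(\cx)$ set $g_\mu(\vp,\nu):=E_\mu[\vp]-\log E_\nu[e^\vp]$. Since $\vp$ is bounded, $e^{-\|\vp\|_\infty}\leq E_\nu[e^\vp]\leq e^{\|\vp\|_\infty}$, so $g_\mu$ is real-valued; moreover $\sup_{\vp\in C_b(\cx)}g_\mu(\vp,\nu)=\textup{KL}(\mu\parallel\nu)$ by the classical duality, and $\inf_{\nu\in\cq}g_\mu(\vp,\nu)=E_\mu[\vp]-\sup_{\nu\in\cq}\log E_\nu[e^\vp]=E_\mu[\vp]-\log\mathbb{E}^{\cq}[e^\vp]$. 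Consequently the left-hand side of the theorem equals $\sup_{\mu\in\cp}\inf_{\nu\in\cq}\sup_{\vp\in C_b(\cx)}g_\mu(\vp,\nu)$, while the right-hand side equals $\sup_{\mu\in\cp}\sup_{\vp\in C_b(\cx)}\inf_{\nu\in\cq}g_\mu(\vp,\nu)$ (after commuting the two suprema and using the displayed formula for the inner infimum, together with $\sup_{\mu\in\cp}E_\mu[\vp]=\mathbb{E}^{\cp}[\vp]$). The inequality
$$\sup_{\mu\in\cp}\inf_{\nu\in\cq}\sup_{\vp\in C_b(\cx)}g_\mu(\vp,\nu)\ \geq\ \sup_{\mu\in\cp}\sup_{\vp\in C_b(\cx)}\inf_{\nu\in\cq}g_\mu(\vp,\nu)$$
holds termwise in $\mu$ by the elementary minimax inequality, so the entire content of the theorem is the reverse inequality, and for that it suffices to establish, for each fixed $\mu\in\cp$, the minimax equality $\inf_{\nu\in\cq}\sup_{\vp}g_\mu(\vp,\nu)=\sup_{\vp}\inf_{\nu\in\cq}g_\mu(\vp,\nu)$.

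I would deduce this from Sion's minimax theorem applied to $g_\mu$ on $C_b(\cx)\times\cq$, with $C_b(\cx)$ in the uniform-norm topology and $\cq$ in the topology of weak convergence. The set $\cq$ is convex and, as an element of $\overline{\cp}(\cx)$, weakly compact, and $C_b(\cx)$ is a convex subset of a linear topological space. For fixed $\vp$, the map $\nu\mapsto g_\mu(\vp,\nu)$ is convex on $\cq$ (the functional $\nu\mapsto E_\nu[e^\vp]$ is affine and $t\mapsto-\log t$ is convex) and weakly continuous, hence lower semicontinuous, on $\cq$ (because $e^\vp\in C_b(\cx)$ and $E_\nu[e^\vp]$ stays bounded away from $0$). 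For fixed $\nu$, the map $\vp\mapsto g_\mu(\vp,\nu)$ is the sum of the linear functional $\vp\mapsto E_\mu[\vp]$ and the concave functional $\vp\mapsto-\log E_\nu[e^\vp]$, concavity of the latter being H\"older's inequality, so it is concave; it is also continuous for the uniform norm, since $\vp_n\to\vp$ uniformly forces $E_\mu[\vp_n]\to E_\mu[\vp]$ and $e^{\vp_n}\to e^\vp$ uniformly, whence $E_\nu[e^{\vp_n}]\to E_\nu[e^\vp]$; in particular it is upper semicontinuous. Sion's theorem then yields the desired equality for each $\mu\in\cp$; taking $\sup_{\mu\in\cp}$ and rearranging the suprema as above gives the theorem. (Only the convexity and weak compactness of $\cq$ are actually needed.)

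I expect the main obstacle to be the careful checking of the hypotheses of Sion's theorem — in particular picking a topology on $C_b(\cx)$ for which $g_\mu(\cdot,\nu)$ is upper semicontinuous while $\cq$ remains compact — and the fact that $\sup_{\vp}g_\mu(\vp,\nu)=\textup{KL}(\mu\parallel\nu)$ may be $+\infty$. The latter is not a real difficulty: since $g_\mu$ is itself finite-valued, Sion's conclusion is a legitimate identity in $[-\infty,+\infty]$, and alternatively one can dispose of the degenerate case $\inf_{\nu\in\cq}\textup{KL}(\mu\parallel\nu)=+\infty$ by hand. The remaining steps — interchanging the two suprema, the identity $\inf_{\nu\in\cq}g_\mu(\vp,\nu)=E_\mu[\vp]-\log\mathbb{E}^{\cq}[e^\vp]$, and $\sup_{\mu\in\cp}E_\mu[\vp]=\mathbb{E}^{\cp}[\vp]$ — are routine.
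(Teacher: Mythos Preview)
Your proposal is correct and follows essentially the same route as the paper: fix $\mu$, set up the same functional $g_\mu(\vp,\nu)=E_\mu[\vp]-\log E_\nu[e^\vp]$, verify concavity in $\vp$ via H\"older and convexity in $\nu$ via the concavity of $\log$, invoke Sion's minimax theorem to swap $\inf_{\nu\in\cq}$ and $\sup_{\vp}$, and then commute the two outer suprema. Your write-up is in fact more careful than the paper's about the topological hypotheses of Sion's theorem (semicontinuity in each variable, weak compactness of $\cq$) and about the possibility that $\sup_\vp g_\mu=+\infty$, but these are refinements of the same argument rather than a different approach.
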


\begin{proof}
    For fixed $\mu^*\in{\cp}$, we define a function $f(\vp,\nu)$ on $C_b(\cx)\times\cq$:    
    $$f(\vp,\nu)=E_{\mu^*}[\vp]-\text{log}\lt(E_\nu[e^\vp]\rt).$$ 
   We can verify that $f$ is convex on $\nu$ and concave on $\vp$.
    
    Indeed, for any $\nu_1,\nu_2\in\cq$ and $\lambda_1,\lambda_2>0$ with $\lambda_1+\lambda_2=1$, we have    
        \begin{align*}        
            f(\vp,\lambda_1\nu_1+\lambda_2\nu_2)        
            &=E_{\mu^*}[\vp]-\text{log}\lt(E_{\lambda_1\nu_1+\lambda_2\nu_2}[e^\vp]\rt) \\        
            &=E_{\mu^*}[\vp]-\text{log}\lt(\lambda_1E_{\nu_1}[e^\vp]+\lambda_2E_{\nu_2}[e^\vp]\rt) \\        
            &\leq E_{\mu^*}[\vp]-\lambda_1\text{log}\lt(E_{\nu_1}[e^\vp]\rt)-\lambda_2\text{log}\lt(E_{\nu_2}[e^\vp]\rt) \\        
            %&=\lambda_1(E_{\mu^*}[\vp]-\text{log}\lt(E_{\nu_1}[e^\vp]\rt)+\lambda_2(E_{\mu^*}[\vp]-\text{log}\lt(E_{\nu_2}[e^\vp]\rt) \\        
            &=\lambda_1f(\vp,\nu_1)+\lambda_2f(\vp,\nu_2).   
        \end{align*}
        
   For the concavity of $f$ on $\vp$,  let $\vp_1,\vp_2\in C_b(\cx)$ and $\lambda_1,\lambda_2>0$ with $\lambda_1+\lambda_2=1$. By H\"{o}lder's inequality, 
   we have
        \begin{align*}        
            f(\lambda_1\vp_1+\lambda_2\vp_2,\nu)        
            &=E_{\mu^*}[\lambda_1\vp_1+\lambda_2\vp_2]-\text{log}\lt(E_\nu[e^{\lambda_1\vp_1+\lambda_2\vp_2}]\rt)\\   
            %&=\lambda_1E_{\mu^*}[\vp_1]+\lambda_2E_{\mu^*}[\vp_2]-\text{log}\lt(E_\nu[e^{\lambda_1\vp_1} e^{\lambda_2\vp_2}]\rt) \\        
            &\geq E_{\mu^*}[\lambda_1\vp_1+\lambda_2\vp_2]-\text{log}\lt\{\lt(E_\nu[e^{\vp_1}]\rt)^{\lambda_1}\lt(E_\nu[e^{\vp_2}]\rt)^{\lambda_2}\rt\} \\ 
            %&=\lambda_1E_{\mu^*}[\vp_1]+\lambda_2E_{\mu^*}[\vp_2]-\text{log}\lt\{\lt(E_\nu[e^{\vp_1}]\rt)^{\lambda_1}\lt(E_\nu[e^{\vp_2}]\rt)^{\lambda_2}\rt\} \\        
            &=\lambda_1E_{\mu^*}[\vp_1]+\lambda_2E_{\mu^*}[\vp_2]-\lambda_1\text{log}\lt(E_\nu[e^{\vp_1}]\rt)-\lambda_2\text{log}\lt(E_\nu[e^{\vp_2}]\rt) \\        
            &=\lambda_1f(\vp_1,\nu)+\lambda_2f(\vp_2,\nu),    
        \end{align*}
    which implies that $f(\vp,\nu)$ is concave on $\vp$. 
    
    Thanks to the minimax theorem (see Corollary 3.3 in \cite{sion1958general}) and Proposition \ref{dual}, we immediately obtain 
    \begin{align*}
        \text{KL}(\cp\parallel\cq)&=\sup_{\mu\in\cp}\inf_{\nu\in\cq}\text{KL}(\mu\parallel \nu)\\
        &=\sup_{\mu\in\cp}\inf_{\nu\in\cq}\sup_{\vp\in C_b(\cx)}\lt\{E_\mu[\vp]-\text{log}\lt(E_\nu[e^\vp]\rt)\rt\} \\
        &=\sup_{\mu\in\cp}\sup_{\vp\in C_b(\cx)}\inf_{\nu\in\cq}\lt\{E_\mu[\vp]-\text{log}\lt(E_\nu[e^\vp]\rt)\rt\} \\
        &=\sup_{\vp\in C_b(\cx)}\sup_{\mu\in\cp}\inf_{\nu\in\cq}\lt\{E_\mu[\vp]-\text{log}\lt(E_\nu[e^\vp]\rt)\rt\} \\
        &=\sup_{\vp\in C_b(\cx)}\lt\{\mathbb{E}^{\cp}[\vp]-\text{log}\lt(\mathbb{E}^{\cq}[e^\vp]\rt)\rt\}.
    \end{align*}
\end{proof}
The generalized KL divergence and JS divergence admit the well properties as in the classical case.

\begin{proposition}\label{KL-p}
Let $\cp,\cq\in\overline{\cp}(\cx)$, we have 
    \begin{itemize}        
        \item[(i)] $\textup{KL}(\cp\parallel\cq)\geq 0;$        
        \item[(ii)] $\textup{KL}(\cp\parallel\cq)=0\iff\cp\subset\cq;$
        \item[(iii)] $\textup{KL}(\cp\parallel \cq)=0$ and $\textup{KL}(\cq\parallel \cp)=0$ $\iff$ $\cp=\cq;$
        \item[(iv)] $\textup{KL}(\cp\parallel\cq_1)\geq\textup{KL}(\cp\parallel\cq_2),$ if $\cq_1\subseteq\cq_2$;      \\                $\textup{KL}(\cp_2\parallel\cq)\geq\textup{KL}(\cp_1\parallel\cq),$ if $\cp_1\subseteq\cp_2$;
        %\item[(iv)] Fix $\cp\in\overline{\cp}(\cx)$, $\text{KL}(\cdot\parallel\cp)$ is a convex function;
        %\item[(v)] Fix $\cp\in\overline{\cp}(\cx)$, $\text{KL}(\cp\parallel\cdot)$ is a convex function;
        \item[(v)] $\textup{KL}(\cdot\parallel\cdot)$ is a convex function. 
    \end{itemize}
\end{proposition}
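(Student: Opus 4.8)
The plan is to dispatch (i), (iii), and (iv) quickly from the monotonicity of $\sup$ and $\inf$ together with the classical facts recalled in Section~2, and then to focus on (ii) and (v). For (i): since $\text{KL}(\mu\parallel\nu)\ge 0$ for every pair, each inner $\inf_{\nu\in\cq}\text{KL}(\mu\parallel\nu)$ is nonnegative, hence so is the outer supremum. For (iv): enlarging $\cq$ can only decrease $\inf_{\nu\in\cq}\text{KL}(\mu\parallel\nu)$ for each fixed $\mu$, and taking $\sup_{\mu\in\cp}$ preserves the inequality; enlarging $\cp$ can only increase $\sup_{\mu\in\cp}$. Neither uses weak compactness.

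For (ii), the implication $\cp\subseteq\cq\Rightarrow\text{KL}(\cp\parallel\cq)=0$ follows by taking $\nu=\mu$ inside the infimum, giving $\inf_{\nu\in\cq}\text{KL}(\mu\parallel\nu)\le\text{KL}(\mu\parallel\mu)=0$, and combining with (i). For the converse: if $\text{KL}(\cp\parallel\cq)=0$ then, since every inner infimum is nonnegative, $\inf_{\nu\in\cq}\text{KL}(\mu\parallel\nu)=0$ for each $\mu\in\cp$; I would pick $\nu_n\in\cq$ with $\text{KL}(\mu\parallel\nu_n)\to 0$, apply Pinsker's inequality (Proposition~\ref{pinsker}) to get $V(\mu,\nu_n)\to 0$, hence $\nu_n\to\mu$ weakly, and conclude $\mu\in\cq$ because $\cq$ is weakly compact, hence weakly closed. (Equivalently, weak lower semicontinuity of $\nu\mapsto\text{KL}(\mu\parallel\nu)$ together with weak compactness of $\cq$ shows the infimum is attained, forcing $\mu=\nu^\ast\in\cq$.) Item (iii) is then immediate: by (ii), $\text{KL}(\cp\parallel\cq)=0\iff\cp\subseteq\cq$ and $\text{KL}(\cq\parallel\cp)=0\iff\cq\subseteq\cp$, whose conjunction is $\cp=\cq$, and the reverse implication is trivial.

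The substantive item is (v), where convexity is meant for the Minkowski-type combination $\lambda\cp+(1-\lambda)\cp':=\{\lambda\mu+(1-\lambda)\mu':\mu\in\cp,\ \mu'\in\cp'\}$; this again lies in $\overline{\cp}(\cx)$, being the image of the weakly compact convex set $\cp\times\cp'$ under the weakly continuous affine map $(\mu,\mu')\mapsto\lambda\mu+(1-\lambda)\mu'$. Fix $\cp_1,\cp_2,\cq_1,\cq_2\in\overline{\cp}(\cx)$ and $\lambda\in[0,1]$, and set $\cp=\lambda\cp_1+(1-\lambda)\cp_2$, $\cq=\lambda\cq_1+(1-\lambda)\cq_2$. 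Given $\mu\in\cp$, write $\mu=\lambda\mu_1+(1-\lambda)\mu_2$ with $\mu_i\in\cp_i$, and note that $\lambda\nu_1+(1-\lambda)\nu_2$ ranges over all of $\cq$ as $\nu_i$ range over $\cq_i$. Then, using the joint convexity of the classical KL divergence (recalled after Definition~\ref{CKL}, see \cite{van2014renyi}),
\begin{align*}
\inf_{\nu\in\cq}\text{KL}(\mu\parallel\nu)
&=\inf_{\nu_1\in\cq_1,\,\nu_2\in\cq_2}\text{KL}\bigl(\lambda\mu_1+(1-\lambda)\mu_2\parallel\lambda\nu_1+(1-\lambda)\nu_2\bigr)\\
&\le\inf_{\nu_1\in\cq_1,\,\nu_2\in\cq_2}\bigl\{\lambda\,\text{KL}(\mu_1\parallel\nu_1)+(1-\lambda)\,\text{KL}(\mu_2\parallel\nu_2)\bigr\}\\
&=\lambda\inf_{\nu_1\in\cq_1}\text{KL}(\mu_1\parallel\nu_1)+(1-\lambda)\inf_{\nu_2\in\cq_2}\text{KL}(\mu_2\parallel\nu_2)\\
&\le\lambda\,\text{KL}(\cp_1\parallel\cq_1)+(1-\lambda)\,\text{KL}(\cp_2\parallel\cq_2),
\end{align*}
where the penultimate line uses that the infimum of a sum over a product domain splits into a sum of infima. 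Since the bound is independent of $\mu$ and of its chosen representation, taking $\sup_{\mu\in\cp}$ yields $\text{KL}(\cp\parallel\cq)\le\lambda\,\text{KL}(\cp_1\parallel\cq_1)+(1-\lambda)\,\text{KL}(\cp_2\parallel\cq_2)$, the claimed joint convexity.

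I expect the only delicate step to be the forward direction of (ii): weak compactness of $\cq$ (in particular, weak closedness) is precisely what upgrades ``$\inf=0$'' to ``$\mu\in\cq$'', and without it one could only conclude that $\cp$ lies in the weak closure of $\cq$; everything else is routine manipulation of $\sup$ and $\inf$ with the classical inequalities. An alternative route for (v) would be to argue from the duality formula of the preceding theorem, but the direct computation above is shorter and avoids invoking the minimax theorem.
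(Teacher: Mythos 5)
Your proposal is correct, and all five items go through, but you take a genuinely different route from the paper on the two substantive points. For the converse of (ii) you simply take $\nu=\mu$ inside the infimum, which works since $\cp\subseteq\cq$ puts $\mu$ itself in the feasible set; the paper instead deduces it from the duality formula $\textup{KL}(\cp\parallel\cq)=\sup_{\vp}\{\be^{\cp}[\vp]-\log\be^{\cq}[e^\vp]\}$ together with Jensen's inequality. For the forward direction of (ii) your Pinsker-plus-weak-closedness argument is fine, and your parenthetical alternative (lower semicontinuity of $\nu\mapsto\textup{KL}(\mu\parallel\nu)$ plus weak compactness of $\cq$ to attain the infimum) is exactly the paper's argument. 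The real divergence is (v): the paper again routes everything through the duality formula, estimating $\be^{\lambda_1\cp_1+\lambda_2\cp_2}$ and $\be^{\lambda_1\cq_1+\lambda_2\cq_2}$ and using concavity of $\log$, whereas you invoke the joint convexity of the classical $\textup{KL}$ (recalled in Section 2) and split the infimum of a sum over a product domain. Your computation is shorter, and notably it never uses weak compactness or convexity of the sets --- the duality formula, by contrast, rests on Sion's minimax theorem and hence on those structural hypotheses --- so your proof of (v) is both more elementary and valid on all of $\cp(\cx)$, not just $\overline{\cp}(\cx)$. The one point worth stating explicitly, which you do address, is that a given $\mu\in\lambda\cp_1+(1-\lambda)\cp_2$ may admit several decompositions $\lambda\mu_1+(1-\lambda)\mu_2$; since your bound holds for any one of them, the supremum over $\mu$ is still controlled. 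Everything else (items (i), (iii), (iv)) matches the paper's ``obvious'' dispatch.
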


\begin{proof}
It is obvious that $(i)$ holds since $\textup{KL}(\mu\parallel\nu)\geq 0$. 

For $(ii)$, if $\text{KL}(\cp\parallel\cq)=0$, then for each $\mu\in\cp$, we have $\inf_{\nu\in\cq}\text{KL}(\mu\parallel\nu)=0$. Since $\cq$ is weakly compact and KL divergence is a lower semi-continuous function, there exists a $\nu\in\cq$ such that $\text{KL}(\mu\parallel\nu)=0$, thus $\mu=\nu$, which implies that $\cp\subset\cq$. On the other hand, if $\cp\subset\cq$, then
\begin{align*}
    0\leq\text{KL}(\cp\parallel\cq)&=\sup_{\vp\in C_b(\cx)}\lt\{\mathbb{E}^{\cp}[\vp]-\text{log}\lt(\mathbb{E}^{\cq}[e^\vp]\rt)\rt\}\\
    &\leq\sup_{\vp\in C_b(\cx)}\lt\{\mathbb{E}^{\cq}[\vp]-\text{log}\lt(\mathbb{E}^{\cq}[e^\vp]\rt)\rt\}\leq0.
\end{align*}

Obviously, $(iii)$ and $(iv)$ hold.  For $(v)$, let $\cp_1,\cp_2,\cq_1,\cq_2\in\overline{\cp}(\cx)$ and $\lambda_1,\lambda_2\in[0,1]$ with $\lambda_1+\lambda_2=1$, we have 
     \begin{align*}                
        &~~~\text{KL}(\lambda_1\cp_1+\lambda_2\cp_2\parallel\lambda_1\cq_1+\lambda_2\cq_2)    \\          
        =&\sup_{\vp\in  C_b(\cx)}\lt\{\be^{\lambda_1\cp_1+\lambda_2\cp_2}[\vp]-\text{log}\lt(\be^{\lambda_1\cq_1+\lambda_2\cq_2}[e^\vp]\rt)\rt\} \\   
        \leq&\sup_{\vp\in C_b(\cx)}\lt\{\lambda_1\be^{\cp_1}[\vp]+\lambda_2\be^{\cp_2}[\vp]-\text{log}\lt(\sup_{\mu_1\in\cq_1,\mu_2\in\cq_2}\lt\{\lambda_1 E_{\mu_1}[e^\vp]+\lambda_2E_{\mu_2}[e^\vp]\rt\}\rt)\rt\} \\
        =&\sup_{\vp\in  C_b(\cx)}\lt\{\lambda_1\be^{\cp_1}[\vp]+\lambda_2\be^{\cp_2}[\vp]-\sup_{\mu_1\in\cq_1,\mu_2\in\cq_2}\lt\{\text{log}\lt(\lambda_1 E_{\mu_1}[e^\vp]+\lambda_2E_{\mu_2}[e^\vp]\rt)\rt\}\rt\} \\
        \leq&\sup_{\vp\in  C_b(\cx)}\left\{\lambda_1\be^{\cp_1}[\vp]+\lambda_2\be^{\cp_2}[\vp]-\lambda_1\text{log}\lt(\be^{\cq_1}[e^\vp]\rt)-\lambda_2\text{log}\lt(\be^{\cq_2}[e^\vp]\rt)\right\} \\ 
        \leq&\lambda_1\text{KL}(\cp_1\parallel\cq_1)+\lambda_2\text{KL}(\cp_2\parallel\cq_2),    
    \end{align*}
where $\lambda_1\cp_1+\lambda_2\cp_2:=\{\lambda_1\mu_1+\lambda_2\mu_2|\mu_1\in\cp_1,\mu_2\in\cp_2\}$ and 
$\lambda_1\cq_1+\lambda_2\cq_2$ is defined similarly.
%:=\{\lambda_1\nu_1+\lambda_2\nu_2|\nu_1\in\cq_1,\nu_2\in\cq_2\}$. 
\end{proof}

\begin{remark}
By (ii), if $\cp=\{\mu\}$ and $\cq=\{\nu\}$ are singleton sets, then $\textup{KL}(\cp\parallel\cq)=0$ implies that $\mu\in\cq=\{\nu\}$, thus $\mu=\nu$, which is consistent with the classical results. We also note that (ii) and (iii) do not hold for ${\textup{KL}_*}(\cp\parallel\cq)$ in Remark \ref{remark3}.  
\end{remark}

\begin{corollary}\label{KL-c}
    Fix $\cp\in\overline{\cp}(\cx)$, $\normalfont\text{KL}(\cdot,\cp)$ and $\normalfont\text{KL}(\cp,\cdot)$ are convex functions.
\end{corollary}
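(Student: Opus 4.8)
The plan is to obtain both assertions as immediate specializations of the joint convexity of $\textup{KL}(\cdot\parallel\cdot)$ proved in Proposition \ref{KL-p}(v), by freezing one of the two arguments equal to the fixed set $\cp$. The only point requiring a routine verification beforehand is the elementary identity
\begin{align*}
\lambda_1\cp+\lambda_2\cp=\cp,\qquad \lambda_1,\lambda_2\geq 0,\ \lambda_1+\lambda_2=1,
\end{align*}
valid for any convex set $\cp$: the inclusion ``$\subseteq$'' is precisely convexity of $\cp$, while ``$\supseteq$'' follows by writing $x=\lambda_1 x+\lambda_2 x$ for each $x\in\cp$. One should also record that $\overline{\cp}(\cx)$ is stable under Minkowski convex combinations, so that the statements are well posed: $\lambda_1\cq_1+\lambda_2\cq_2$ is convex as a sum of convex sets, and it is weakly compact, being the image of the weakly compact set $\cq_1\times\cq_2$ under the weakly continuous map $(\mu,\nu)\mapsto\lambda_1\mu+\lambda_2\nu$.

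Granting this, for the convexity of $\textup{KL}(\cdot,\cp)$ I would fix $\cq_1,\cq_2\in\overline{\cp}(\cx)$ and $\lambda_1,\lambda_2\in[0,1]$ with $\lambda_1+\lambda_2=1$, and apply Proposition \ref{KL-p}(v) with the roles of $(\cp_1,\cp_2,\cq_1,\cq_2)$ there played by $(\cq_1,\cq_2,\cp,\cp)$. Using $\lambda_1\cp+\lambda_2\cp=\cp$ this yields
\begin{align*}
\textup{KL}(\lambda_1\cq_1+\lambda_2\cq_2\parallel\cp)
=\textup{KL}(\lambda_1\cq_1+\lambda_2\cq_2\parallel\lambda_1\cp+\lambda_2\cp)
\leq\lambda_1\textup{KL}(\cq_1\parallel\cp)+\lambda_2\textup{KL}(\cq_2\parallel\cp),
\end{align*}
which is exactly convexity of $\cq\mapsto\textup{KL}(\cq,\cp)$. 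Symmetrically, for $\textup{KL}(\cp,\cdot)$ I would apply Proposition \ref{KL-p}(v) with $(\cp_1,\cp_2,\cq_1,\cq_2)$ replaced by $(\cp,\cp,\cq_1,\cq_2)$, again collapsing $\lambda_1\cp+\lambda_2\cp$ to $\cp$, to get $\textup{KL}(\cp\parallel\lambda_1\cq_1+\lambda_2\cq_2)\leq\lambda_1\textup{KL}(\cp\parallel\cq_1)+\lambda_2\textup{KL}(\cp\parallel\cq_2)$.

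There is essentially no obstacle here: the corollary is a direct consequence of Proposition \ref{KL-p}(v). The only subtlety worth flagging is that the joint convexity in Proposition \ref{KL-p}(v) is stated for the \emph{pair} $(\cp,\cq)$ with the \emph{same} convex weights in both slots, so reducing it to the two one-variable statements genuinely relies on the collapse $\lambda_1\cp+\lambda_2\cp=\cp$; this is exactly where convexity of $\cp$ (part of the standing assumption $\cp\in\overline{\cp}(\cx)$) enters and cannot be dropped. If a self-contained argument were preferred, the same two inequalities can instead be read off directly from the duality formula, following the computation in the proof of Proposition \ref{KL-p}(v) with $\cq_1=\cq_2=\cp$ (resp.\ $\cp_1=\cp_2=\cp$), since $\be^{\cp}[\cdot]$ is unchanged under such a collapse.
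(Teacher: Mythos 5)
Your proposal is correct and matches the paper's (implicit) intent exactly: the corollary is stated without proof as a direct consequence of Proposition \ref{KL-p}(v), and your derivation — specializing the joint convexity to equal arguments in one slot and using the collapse $\lambda_1\cp+\lambda_2\cp=\cp$ for convex $\cp$ — is precisely the standard reduction the authors have in mind. Your explicit verification of that collapse and of the stability of $\overline{\cp}(\cx)$ under Minkowski convex combinations is a welcome addition rather than a deviation.
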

% \begin{proof}
%     For $\cq_1,\cq_2\in\overline{\cp}(\cx)$ and $\lambda_1,\lambda_2\in [0,1]$ with $\lambda_1+\lambda_2=1$, 
%     since $\cp$ is a convex set, it is easy to check that $\lambda_1\cp+\lambda_2\cp=\cp$, then we have  
%     \begin{align*}
%         \text{KL}(\lambda_1\cq_1+\lambda_2\cq_2\parallel\cp)&=\text{KL}(\lambda_1\cq_1+\lambda_2\cq_2\parallel\lambda_1\cp+\lambda_2\cp) \\
%         &\leq\lambda_1\text{KL}(\cq_1\parallel\cp)+\lambda_2\text{KL}(\cq_2\parallel\cp), 
%     \end{align*}
%     which indicates that $\text{KL}(\cdot,\cp)$ is a convex function. Similarly, $\text{KL}(\cp,\cdot)$ is convex.
% \end{proof}

\begin{corollary}

    For $\cp,\cq\in\overline{\cp}(\cx)$, we have
    \begin{itemize}
        \item[(i)] $0\leq\textup{JS}(\cp\parallel\cq)\leq \textup{log}2;$
        \item[(ii)] $\textup{JS}(\cp\parallel\cq)=0\iff\cp=\cq;$
        %\item[(iii)] Fix $\cp\in\overline{\cp}(\cx)$, $\text{JS}(\cdot,\cp)$ and $\text{JS}(\cp,\cdot)$ are convex functions; 
        \item[(iii)] $\textup{JS}(\cdot,\cdot)$ is a convex function. 
    \end{itemize}
\end{corollary}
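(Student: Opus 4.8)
The plan is to deduce all three parts from the properties of the generalized KL divergence collected in Proposition~\ref{KL-p}, together with two elementary structural facts about the midpoint set $\cm=\frac{\cp+\cq}{2}$. First I would record that $\cm\in\overline{\cp}(\cx)$ whenever $\cp,\cq\in\overline{\cp}(\cx)$: $\cm$ is convex because $\cp$ and $\cq$ are, and it is weakly compact as the continuous image of the weakly compact set $\cp\times\cq$ under $(\mu,\nu)\mapsto\frac{\mu+\nu}{2}$; hence Proposition~\ref{KL-p} applies with $\cq$ replaced by $\cm$. Second, I would note the identity $\be^{\cm}[\vp]=\frac12\be^{\cp}[\vp]+\frac12\be^{\cq}[\vp]$ for all $\vp\in C_b(\cx)$ (the suprema over $\mu\in\cp$ and $\nu\in\cq$ decouple), and, for part~(iii), the Minkowski identity $\frac{(\lambda_1\cp_1+\lambda_2\cp_2)+(\lambda_1\cq_1+\lambda_2\cq_2)}{2}=\lambda_1\cm_1+\lambda_2\cm_2$ with $\cm_i=\frac{\cp_i+\cq_i}{2}$, all sets involved again lying in $\overline{\cp}(\cx)$ by the same reasoning.

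For (i), $\textup{JS}(\cp,\cq)\geq 0$ is immediate from Proposition~\ref{KL-p}(i). For the upper bound, I would fix $\mu\in\cp$ and pick any $\nu\in\cq$, so that $m:=\frac{\mu+\nu}{2}\in\cm$; since $\mu\leq 2m$ as measures, the density $\frac{\d\mu}{\d m}$ is at most $2$, whence $\textup{KL}(\mu\parallel m)\leq\log 2$. Therefore $\inf_{m\in\cm}\textup{KL}(\mu\parallel m)\leq\log 2$, and taking the supremum over $\mu\in\cp$ gives $\textup{KL}(\cp\parallel\cm)\leq\log 2$; symmetrically $\textup{KL}(\cq\parallel\cm)\leq\log 2$, so $\textup{JS}(\cp,\cq)\leq\log 2$.

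For (ii): if $\cp=\cq$ then $\cm=\cp$ by convexity of $\cp$, so $\textup{JS}(\cp,\cq)=\textup{KL}(\cp\parallel\cp)=0$ by Proposition~\ref{KL-p}(ii). Conversely, if $\textup{JS}(\cp,\cq)=0$ then, both summands being non-negative, $\textup{KL}(\cp\parallel\cm)=\textup{KL}(\cq\parallel\cm)=0$, so $\cp\subseteq\cm$ and $\cq\subseteq\cm$ by Proposition~\ref{KL-p}(ii). Using $\be^{\cm}=\frac12\be^{\cp}+\frac12\be^{\cq}$, the inclusion $\cp\subseteq\cm$ yields $\be^{\cp}[\vp]\leq\frac12\be^{\cp}[\vp]+\frac12\be^{\cq}[\vp]$, i.e.\ $\be^{\cp}[\vp]\leq\be^{\cq}[\vp]$ for all $\vp\in C_b(\cx)$; since $\cp,\cq$ are weakly compact and convex, this forces $\cp\subseteq\cq$ (a weakly compact convex set of probability measures is recovered from its sublinear expectation, via the Hahn--Banach separation theorem). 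Symmetrically $\cq\subseteq\cp$, hence $\cp=\cq$.

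For (iii), using the Minkowski identity, $\textup{JS}(\lambda_1\cp_1+\lambda_2\cp_2,\lambda_1\cq_1+\lambda_2\cq_2)=\frac12\textup{KL}(\lambda_1\cp_1+\lambda_2\cp_2\parallel\lambda_1\cm_1+\lambda_2\cm_2)+\frac12\textup{KL}(\lambda_1\cq_1+\lambda_2\cq_2\parallel\lambda_1\cm_1+\lambda_2\cm_2)$; applying the convexity of the generalized KL divergence (Proposition~\ref{KL-p}(v)) to each term and regrouping gives $\textup{JS}(\lambda_1\cp_1+\lambda_2\cp_2,\lambda_1\cq_1+\lambda_2\cq_2)\leq\lambda_1\textup{JS}(\cp_1,\cq_1)+\lambda_2\textup{JS}(\cp_2,\cq_2)$. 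The only genuinely non-routine step is the implication ``$\be^{\cp}\leq\be^{\cq}$ on $C_b(\cx)$ $\Rightarrow$ $\cp\subseteq\cq$'' used in (ii), which is precisely where weak compactness and convexity are needed; everything else is bookkeeping on top of Proposition~\ref{KL-p}.
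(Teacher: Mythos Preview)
Your proof is correct. The paper states this corollary without proof, so there is no original argument to compare against; your route—reducing all three parts to Proposition~\ref{KL-p} together with the elementary identities for $\cm=\frac{\cp+\cq}{2}$—is precisely the intended one. You also correctly isolate the one step that is not pure bookkeeping: in part~(ii), passing from $\be^{\cp}=\be^{\cq}$ on $C_b(\cx)$ to $\cp=\cq$ needs the representation theorem for weakly compact convex sets of probability measures (Hahn--Banach separation in the weak topology), a fact the paper uses implicitly throughout its sublinear-expectation framework.
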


\section{Convergent Results}
We first generalize Pinsker's inequality (Proposition \ref{pinsker}) to the setting of sets of probability measures, which is used to obtain convergent results. 

\begin{theorem}[The generalized Pinsker's inequality]
    %\normalfont\text{(The generalized Pinsker's inequality)}
    Let $\cp,\cq\in\overline{\cp}(\cx)$, then
    \normalfont
    \begin{equation*}
        \max\{\text{KL}(\cp\parallel\cq),\text{KL}(\cq\parallel\cp)\}\geq 2 V^2(\cp,\cq).
    \end{equation*}
\end{theorem}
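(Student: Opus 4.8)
The plan is to lift the classical Pinsker inequality (Proposition \ref{pinsker}) through the $\sup$-$\inf$ structure defining the generalized quantities, using only the elementary observation that $t\mapsto 2t^2$ is non-decreasing on $[0,\infty)$ and therefore commutes with infima and suprema of nonnegative families.

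First I would note that $V(\cp,\cq)$, being a maximum of two terms, equals one of them; and since the left-hand side $\max\{\text{KL}(\cp\parallel\cq),\text{KL}(\cq\parallel\cp)\}$ is symmetric in $\cp,\cq$ while $V(\mu,\nu)=V(\nu,\mu)$, one may assume without loss of generality that $V(\cp,\cq)=\sup_{\mu\in\cp}\inf_{\nu\in\cq}V(\mu,\nu)$ (the other case follows by relabelling $\cp\leftrightarrow\cq$). It then suffices to establish $\text{KL}(\cp\parallel\cq)\geq 2V^2(\cp,\cq)$.

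Next I would fix $\mu\in\cp$ and apply classical Pinsker to the pair $(\mu,\nu)$ for each $\nu\in\cq$, giving $\text{KL}(\mu\parallel\nu)\geq 2V^2(\mu,\nu)$. Taking the infimum over $\nu\in\cq$ and pulling the square outside — legitimate by the monotonicity noted above, since $V\geq 0$ — yields $\inf_{\nu\in\cq}\text{KL}(\mu\parallel\nu)\geq 2\lt(\inf_{\nu\in\cq}V(\mu,\nu)\rt)^2$. Finally I would take the supremum over $\mu\in\cp$, again moving the square outside, to conclude
$$\text{KL}(\cp\parallel\cq)=\sup_{\mu\in\cp}\inf_{\nu\in\cq}\text{KL}(\mu\parallel\nu)\geq 2\lt(\sup_{\mu\in\cp}\inf_{\nu\in\cq}V(\mu,\nu)\rt)^2=2V^2(\cp,\cq).$$

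I do not expect a genuine obstacle here: the whole argument is a termwise transfer of the two-measure inequality, and the only point deserving a sentence of justification is the commutation of $v\mapsto 2v^2$ with $\inf$ and $\sup$ over the nonnegative values $V(\mu,\nu)$. It is also worth remarking, in contrast with the other results of this section, that this proof invokes neither the weak compactness nor the convexity of $\cp$ and $\cq$.
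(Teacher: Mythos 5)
Your proposal is correct and follows essentially the same route as the paper: apply the classical Pinsker inequality termwise, then pass to $\inf_{\nu}$ and $\sup_{\mu}$ using the monotonicity of $t\mapsto 2t^2$ on $[0,\infty)$. The only cosmetic difference is that you reduce to one branch of the $\max$ by a symmetry argument, whereas the paper bounds both branches and then takes the maximum; your closing remark that neither compactness nor convexity is needed here is also accurate.
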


\begin{proof}
    By the Proposition \ref{pinsker}, we have
    \begin{align*}
        \text{KL}(\cp\parallel\cq)
        &=\sup_{\mu\in\cp}\inf_{\nu\in\cq}\text{KL}(\mu\parallel\nu) \\
        &\geq 2\sup_{\mu\in\cp}\inf_{\nu\in\cq}V^2(\mu,\nu)=2(\sup_{\mu\in\cp}\inf_{\nu\in\cq}V(\mu,\nu))^2.
    \end{align*}
    Similarly,
    $\text{KL}(\cq\parallel\cp)\geq 2(\sup_{\nu\in\cq}\inf_{\mu\in\cp}V(\mu,\nu))^2$. Thus we obtain
    
    \[\max\{\text{KL}(\cp\parallel\cq),\text{KL}(\cq\parallel\cp)\}\geq 2 V^2(\cp,\cq).\]
\end{proof}

For $\cp,\cq\in\overline{\cp}(\cx)$, let 
$\overline{\text{KL}}(\cp,\cq):=\text{KL}(\cp\parallel\cq)+\text{KL}(\cq\parallel\cp),$    
we have the following theorems. 

\begin{theorem}
   Let $\cp, \cp_n\in\overline{\cp}(\cx)$ for $n\geq 1$.
    If $\lim_{n\to\infty}\normalfont \overline{\text{KL}}(\cp_n,\cp)=0$, then 
   \begin{itemize}
       \item[(i)] $\lim_{n\to\infty}V(\cp_n,\cp)=0.$ 
       \item[(ii)] $\lim_{n\to\infty}\normalfont\text{JS}(\cp_n,\cp)=0.$
   \end{itemize} 
\end{theorem}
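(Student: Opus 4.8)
The plan is to derive the two statements separately, each as a short consequence of results already established: part (i) from the generalized Pinsker's inequality, and part (ii) from the convexity of the generalized KL divergence (Proposition \ref{KL-p}(v)). For (i), note that by Proposition \ref{KL-p}(i) both $\textup{KL}(\cp_n\parallel\cp)$ and $\textup{KL}(\cp\parallel\cp_n)$ are nonnegative, so $\overline{\textup{KL}}(\cp_n,\cp)\ge\max\{\textup{KL}(\cp_n\parallel\cp),\textup{KL}(\cp\parallel\cp_n)\}$, and the generalized Pinsker's inequality gives $\overline{\textup{KL}}(\cp_n,\cp)\ge 2V^2(\cp_n,\cp)$. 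Hence $0\le V(\cp_n,\cp)\le\sqrt{\overline{\textup{KL}}(\cp_n,\cp)/2}$, and letting $n\to\infty$ yields $V(\cp_n,\cp)\to 0$.

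For (ii), write $\cm_n=\frac{\cp_n+\cp}{2}$, so that $\textup{JS}(\cp_n,\cp)=\frac{1}{2}\textup{KL}(\cp_n\parallel\cm_n)+\frac{1}{2}\textup{KL}(\cp\parallel\cm_n)$. The key observation is that, since $\cp_n$ is convex, $\frac{1}{2}\cp_n+\frac{1}{2}\cp_n=\cp_n$, and since $\cp_n\subset\cp_n$, Proposition \ref{KL-p}(ii) gives $\textup{KL}(\cp_n\parallel\cp_n)=0$. Writing $\cm_n=\frac{1}{2}\cp_n+\frac{1}{2}\cp$ and applying Proposition \ref{KL-p}(v) with $\lambda_1=\lambda_2=\frac{1}{2}$, $\cp_1=\cp_2=\cp_n$, $\cq_1=\cp_n$, $\cq_2=\cp$,
\[
\textup{KL}(\cp_n\parallel\cm_n)=\textup{KL}\lt(\tfrac12\cp_n+\tfrac12\cp_n\parallel\tfrac12\cp_n+\tfrac12\cp\rt)\le\tfrac12\textup{KL}(\cp_n\parallel\cp_n)+\tfrac12\textup{KL}(\cp_n\parallel\cp)=\tfrac12\textup{KL}(\cp_n\parallel\cp),
\]
and symmetrically, exchanging the roles of $\cp_n$ and $\cp$, $\textup{KL}(\cp\parallel\cm_n)\le\frac{1}{2}\textup{KL}(\cp\parallel\cp_n)$. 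Adding these two bounds gives $\textup{JS}(\cp_n,\cp)\le\frac{1}{4}\overline{\textup{KL}}(\cp_n,\cp)$, which tends to $0$; combined with $\textup{JS}(\cp_n,\cp)\ge 0$ this proves (ii).

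I do not anticipate a genuine obstacle here; the argument is essentially bookkeeping on top of the generalized Pinsker inequality and the convexity of $\textup{KL}(\cdot\parallel\cdot)$. The only points that need a line of justification are the idempotency $\frac{1}{2}\cp_n+\frac{1}{2}\cp_n=\cp_n$ (the inclusion $\supseteq$ is trivial, $\subseteq$ is convexity of $\cp_n$), the fact that Proposition \ref{KL-p}(v) is invoked only for sets lying in $\overline{\cp}(\cx)$, and the remark that the displayed inequalities hold verbatim even when their right-hand sides equal $+\infty$, so that the limit passage is legitimate once the hypothesis $\overline{\textup{KL}}(\cp_n,\cp)\to 0$ forces the relevant quantities to be finite for all large $n$.
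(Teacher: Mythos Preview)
Your proof is correct and follows essentially the same route as the paper: part (i) via the generalized Pinsker inequality bounding $V(\cp_n,\cp)$ by $\sqrt{\overline{\textup{KL}}(\cp_n,\cp)/2}$, and part (ii) via convexity of the generalized KL divergence to obtain $\textup{JS}(\cp_n,\cp)\le\frac{1}{4}\overline{\textup{KL}}(\cp_n,\cp)$. The only cosmetic difference is that the paper appeals to Corollary~\ref{KL-c} (convexity of $\textup{KL}(\cp,\cdot)$ in the second slot) directly, whereas you invoke the joint convexity of Proposition~\ref{KL-p}(v) and spell out the idempotency $\frac{1}{2}\cp_n+\frac{1}{2}\cp_n=\cp_n$; these are the same computation.
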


\begin{proof}
For $(i)$,  by the generalized Pinsker's inequality, it is easy to check that
    \begin{align*}
        V(\cp_n,\cp)\leq \sqrt{\frac{1}{2}\max\lt\{\text{KL}(\cp_n\parallel\cp),\text{KL}(\cp\parallel\cp_n)\rt\}}\leq\sqrt{\frac{1}{2}\overline{\text{KL}}(\cp_n,\cp)}, 
    \end{align*}
    thus $\lim_{n\to\infty}V(\cp_n,\cp)=0.$

For $(ii)$, by Corollary \ref{KL-c}, we have 
    \begin{align*}
        0&\leq\text{JS}(\cp_n,\cp) \\
        &=\frac{1}{2}\text{KL}\lt(\cp_n\parallel\frac{\cp_n+\cp}{2}\rt)+\frac{1}{2}\text{KL}\lt(\cp\parallel\frac{\cp_n+\cp}{2}\rt) \\
        &\leq \frac{1}{2}\lt(\frac{1}{2}\text{KL}(\cp_n\parallel\cp_n)+\frac{1}{2}\text{KL}(\cp_n\parallel\cp)\rt)+\frac{1}{2}\lt(\frac{1}{2}\text{KL}(\cp\parallel\cp_n)+\frac{1}{2}\text{KL}(\cp\parallel\cp)\rt) \\
        &=\frac{1}{4}\lt(\text{KL}(\cp_n\parallel\cp)+\text{KL}(\cp\parallel\cp_n)\rt)=\frac{1}{4}\overline{\text{KL}}(\cp_n,\cp).
    \end{align*}
    Let $n\to\infty$, we have $\lim_{n\to\infty}\text{JS}(\cp_n,\cp)=0$.
\end{proof}

\begin{theorem}
    Let $\cp, \cp_n\in\overline{\cp}(\cx)$ for $n\geq 1$.
    Then $\lim_{n\to\infty}\normalfont\text{JS}(\cp_n,\cp)=0$, if and only if $\lim_{n\to\infty}V(\cp_n,\cp)=0$.
\end{theorem}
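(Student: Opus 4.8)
The plan is to prove both directions through the quantitative estimates
\[
\textup{JS}(\cp,\cq)\le V(\cp,\cq)\qquad\text{and}\qquad V(\cp,\cq)\le 2\sqrt{\textup{JS}(\cp,\cq)},
\]
valid for all $\cp,\cq\in\overline{\cp}(\cx)$; the equivalence of the two limit conditions then follows by sandwiching. Throughout I write $\cm_n:=\tfrac12(\cp_n+\cp)$, which again lies in $\overline{\cp}(\cx)$ since it is the image of the weakly compact convex set $\cp_n\times\cp$ under the weakly continuous map $(\mu,\nu)\mapsto\tfrac12(\mu+\nu)$, and I record the identity $\be^{\cm_n}[\vp]=\tfrac12\bigl(\be^{\cp_n}[\vp]+\be^{\cp}[\vp]\bigr)$ for all $\vp\in C_b(\cx)$, which holds precisely because the two measures making up an element of $\cm_n$ may be chosen independently.

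For the implication ``$V(\cp_n,\cp)\to 0\Rightarrow\textup{JS}(\cp_n,\cp)\to 0$'' I would use the elementary bound $\textup{KL}\bigl(\mu\parallel\tfrac{\mu+\nu}{2}\bigr)\le V(\mu,\nu)$: writing densities with respect to $\mu+\nu$, the inequality $\log t\le t-1$ together with $\tfrac{(a-b)^2}{a+b}\le|a-b|$ for $a,b\ge 0$ gives $\textup{KL}(\mu\parallel\tfrac{\mu+\nu}{2})\le\tfrac12\int\tfrac{(\d\mu-\d\nu)^2}{\d\mu+\d\nu}\le\tfrac12\int|\d\mu-\d\nu|=V(\mu,\nu)$. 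Since $\tfrac12(\cp+\cq)$ contains $\tfrac12(\mu+\nu)$ whenever $\mu\in\cp$, $\nu\in\cq$ — in particular with either of the two measures held fixed — this yields $\textup{KL}(\cp\parallel\tfrac{\cp+\cq}{2})\le\sup_{\mu\in\cp}\inf_{\nu\in\cq}V(\mu,\nu)\le V(\cp,\cq)$ and, symmetrically, $\textup{KL}(\cq\parallel\tfrac{\cp+\cq}{2})\le V(\cp,\cq)$, hence $\textup{JS}(\cp,\cq)\le V(\cp,\cq)$; applying this to $(\cp_n,\cp)$ settles this direction.

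The reverse implication is the substantive one, and the main obstacle is that it cannot be handled at the level of individual measures: a generic element of $\cm_n$ is a midpoint $\tfrac12(\mu'+\nu')$ of a measure $\mu'\in\cp_n$ and an unrelated measure $\nu'\in\cp$, and closeness of such a midpoint to a given measure does not force $\mu'$ or $\nu'$ to be close to it (nor to each other), so the naive ``triangle inequality through $\cm_n$'' argument fails. The remedy is to linearize, passing to the sublinear expectations, where $\be^{\cm_n}=\tfrac12(\be^{\cp_n}+\be^{\cp})$ is an exact identity. Accordingly, I would first establish the generalized duality formula for the total variation metric, by Sion's minimax theorem (using the weak compactness of the sets), exactly as in the proof of the generalized $\textup{KL}$ duality formula: for $\cp,\cq\in\overline{\cp}(\cx)$,
\[
\sup_{\mu\in\cp}\inf_{\nu\in\cq}V(\mu,\nu)=\tfrac12\sup_{\vp\in C_b(\cx),\,\|\vp\|_\infty\le 1}\bigl(\be^{\cp}[\vp]-\be^{\cq}[\vp]\bigr),
\]
and consequently $V(\cp,\cq)=\tfrac12\sup_{\|\vp\|_\infty\le 1}\bigl|\be^{\cp}[\vp]-\be^{\cq}[\vp]\bigr|$. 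Combining this with $\be^{\cm_n}=\tfrac12(\be^{\cp_n}+\be^{\cp})$ gives the two key identities $\sup_{\mu\in\cp_n}\inf_{m\in\cm_n}V(\mu,m)=\tfrac12\sup_{\mu\in\cp_n}\inf_{\nu\in\cp}V(\mu,\nu)$ and $\sup_{\nu\in\cp}\inf_{m\in\cm_n}V(\nu,m)=\tfrac12\sup_{\nu\in\cp}\inf_{\mu\in\cp_n}V(\mu,\nu)$.

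Finally, suppose $\textup{JS}(\cp_n,\cp)\to 0$. Since both summands of $\textup{JS}$ are nonnegative, $\textup{KL}(\cp_n\parallel\cm_n)\le 2\,\textup{JS}(\cp_n,\cp)\to 0$ and $\textup{KL}(\cp\parallel\cm_n)\le 2\,\textup{JS}(\cp_n,\cp)\to 0$. Applying the inequality exhibited inside the proof of the generalized Pinsker inequality, namely $\textup{KL}(\cp\parallel\cq)\ge 2\bigl(\sup_{\mu\in\cp}\inf_{\nu\in\cq}V(\mu,\nu)\bigr)^2$, to the pairs $(\cp_n,\cm_n)$ and $(\cp,\cm_n)$ and inserting the two identities above, one obtains $\sup_{\mu\in\cp_n}\inf_{\nu\in\cp}V(\mu,\nu)\le\sqrt{2\,\textup{KL}(\cp_n\parallel\cm_n)}$ and $\sup_{\nu\in\cp}\inf_{\mu\in\cp_n}V(\mu,\nu)\le\sqrt{2\,\textup{KL}(\cp\parallel\cm_n)}$; taking the maximum gives $V(\cp_n,\cp)\le\sqrt{2\,\max\{\textup{KL}(\cp_n\parallel\cm_n),\textup{KL}(\cp\parallel\cm_n)\}}\le 2\sqrt{\textup{JS}(\cp_n,\cp)}\to 0$. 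Thus, apart from the total variation duality formula (the one genuinely new ingredient, together with the observation that a direct metric argument is not available), the argument is assembled from the duality formula for $\textup{KL}$, the generalized Pinsker inequality, and the additivity identity for $\be^{\cm_n}$.
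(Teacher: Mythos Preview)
Your proof is correct and, for the harder direction $\textup{JS}\to 0\Rightarrow V\to 0$, takes a genuinely different route from the paper. For $V\to 0\Rightarrow\textup{JS}\to 0$ both arguments rest on the same pointwise bound $\textup{KL}(\mu\parallel\tfrac{\mu+\nu}{2})\le V(\mu,\nu)$; the paper simply cites it from \cite{lin1991divergence}, while you prove it. For the reverse implication the paper argues by contradiction at the level of individual measures: picking $\mu_k\in\cp_{n_k}$, $\nu_k\in\cp$ with $V(\mu_k,\nu_k)>\epsilon$ and then using the triangle inequality $V(\mu_k,\gamma)+V(\nu_k,\gamma)>\epsilon$ for every $\gamma\in\cm_{n_k}$, together with Pinsker, to force one of $\textup{KL}(\cp_{n_k}\parallel\cm_{n_k})$, $\textup{KL}(\cp\parallel\cm_{n_k})$ to stay bounded away from zero. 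Your approach instead linearizes via the total variation duality formula (which the paper proves only \emph{after} this theorem, so you are right to flag it as a separate ingredient) and the exact identity $\be^{\cm_n}=\tfrac12(\be^{\cp_n}+\be^{\cp})$, obtaining the clean relation $\sup_{\mu\in\cp_n}\inf_{m\in\cm_n}V(\mu,m)=\tfrac12\sup_{\mu\in\cp_n}\inf_{\nu\in\cp}V(\mu,\nu)$ and its symmetric counterpart; the one-sided generalized Pinsker estimate then closes the argument directly. What your approach buys is a quantitative two-sided sandwich $\textup{JS}(\cp,\cq)\le V(\cp,\cq)\le 2\sqrt{\textup{JS}(\cp,\cq)}$ rather than a mere equivalence of limits, and it sidesteps the delicate passage in the paper's argument from ``for every $\gamma$, one of two inequalities holds'' to ``one of the two infima over $\gamma$ is bounded below,'' a step whose justification is not spelled out there.
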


\begin{proof} We first show that
$$\lim_{n\to\infty}\normalfont\text{JS}(\cp_n,\cp)=0 \Rightarrow \lim_{n\to\infty}V(\cp_n,\cp)=0.$$
    If it does not hold, then there exist $\epsilon>0$ and a subsequence $\{\cp_{n_k}\}$ such that $V(\cp_{n_k},\cp)>\epsilon$ holds for each $k$. Without loss of generality, we assume that there exists a subsequence $\{\cp_{n_k}\}$ such that $\sup_{\mu_k\in\cp_{n_k}}\inf_{\nu\in\cp}V(\mu_k,\nu)>\epsilon$ for each $k$. Therefore, there exists a $\mu_k\in\cp_{n_k}$ such that $\inf_{\nu\in\cp}V(\mu_k,\nu)>\epsilon$, since $\cp$ is weakly compact and total variation distance is a lower semi-continuous function, there exists a $\nu_k\in\cp$ such that $V(\mu_k,\nu_k)>\epsilon$. Let $\cm_{n_k}=\frac{\cp_{n_k}+\cp}{2}$, then $\forall \ \gamma\in\cm_{n_k}$, we have 
    $$V(\mu_k,\gamma)+V(\nu_k,\gamma)\geq V(\mu_k,\nu_k)>\epsilon,$$
    thus $V(\mu_k,\gamma)>\frac{\epsilon}{2}$ or $V(\nu_k,\gamma)>\frac{\epsilon}{2}$. 
    
    By Pinsker's inequality, we have
    $$\text{KL}(\mu_k\parallel\gamma)>\frac{\epsilon^2}{2}\ \text{or} \ \text{KL}(\nu_k\parallel\gamma)>\frac{\epsilon^2}{2}.$$ 
    Thus
    $$\inf_{\gamma\in\cm_{n_k}}\text{KL}(\mu_k\parallel\gamma)\geq\frac{\epsilon^2}{2}\ \text{or} \
    \inf_{\gamma\in\cm_{n_k}}\text{KL}(\nu_k\parallel\gamma)\geq\frac{\epsilon^2}{2}.$$
    Furthermore, 
    $$\sup_{\mu_k\in\cp_{n_k}}\inf_{\gamma\in\cm_{n_k}}\text{KL}(\mu_k\parallel\gamma)\geq\frac{\epsilon^2}{2}\ \text{or} \
    \sup_{\nu\in\cp}\inf_{\gamma\in\cm_{n_k}}\text{KL}(\nu\parallel\gamma)\geq\frac{\epsilon^2}{2},$$
    which implies that $\text{KL}(\cp_{n_k}\parallel\cm_{n_k})\geq\frac{\epsilon^2}{2}\ \text{or}\ \text{KL}(\cp\parallel\cm_{n_k})\geq\frac{\epsilon^2}{2}$. 
    
    We note that 
    $$\text{JS}(\cp_{n_k},\cp)=\frac{1}{2}\text{KL}(\cp_{n_k}\parallel\cm_{n_k})+\frac{1}{2}\text{KL}(\cp\parallel\cm_{n_k})\geq\frac{\epsilon^2}{4}>0,$$
    which contradicts $\lim_{n_k\to\infty}\text{JS}(\cp_{n_k},\cp)=0$.  

Secondly, if $\lim_{n\to\infty}V(\cp_n,\cp)=0$, then 
$\lim_{n\to\infty}\sup_{\mu_n\in\cp_n}\inf_{\nu\in\cp}V(\mu_n,\nu)=0$ and $\lim_{n\to\infty}\sup_{\nu\in\cp}\inf_{\mu_n\in\cp_n}V(\mu_n,\nu)=0$. Thus, $\forall\epsilon>0,\exists\ N\in\bn$, when $n\geq N$, we have 
$$\sup_{\mu_n\in\cp_n}\inf_{\nu\in\cp}V(\mu_n,\nu)<\epsilon\  \text{and} \ \sup_{\nu\in\cp}\inf_{\mu_n\in\cp_n}V(\mu_n,\nu)<\epsilon.$$ 
Then if $n\geq N$, for each $\mu_n\in\cp_n$, there exists a $\nu_n\in\cp$ such that $V(\mu_n,\nu_n)<\epsilon$, let $\cm_{n}=\frac{\cp_n+\cp}{2}$, take $\lambda_n=\frac{\mu_n+\nu_n}{2}\in\cm_n$, we have $\text{KL}(\mu_n\parallel\lambda_n)< \epsilon$ (see (3.10) in \cite{lin1991divergence}), then $\inf_{\lambda\in\cm_n}\text{KL}(\mu_n\parallel\lambda)< \epsilon$, thus 
$$\sup_{\mu_n\in\cp_n}\inf_{\lambda\in\cm_n}\text{KL}(\mu_n\parallel\lambda)<\epsilon.$$ 
Similarly, we get $\sup_{\nu\in\cp}\inf_{\lambda\in\cm_n}\text{KL}(\nu\parallel\lambda)<\epsilon$, then we have $\text{JS}(\cp_n,\cp)<\epsilon$, which implies that $\lim_{n\to\infty}\text{JS}(\cp_n,\cp)=0$.
\end{proof}

Next, we generalize the duality formula for the total variation distance in Proposition \ref{dual} to the setting of sets of probability measures.

\begin{theorem}[Duality formula]
    %\normalfont\text{(Duality formula for the generalized total variation distance)}    
        Let $\cp,\cq\in\overline{\cp}(\cx)$, we have    
        $$V(\cp,\cq)=\frac{1}{2}\sup_{\vp\in\mathcal{C}_b(\cx),\Vert \vp\Vert_\infty\leq 1}\lt\vert \be^{\cp}[\vp]-\be^{\cq}[\vp]\rt\vert.$$
        %where $f\in\mathcal{C}_b(\cx)$ and $\Vert f\Vert_\infty=\sup_{x\in\cx}\vert f(x)\vert$.
\end{theorem}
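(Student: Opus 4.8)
The plan is to reduce the statement to the two--measure duality formula (Proposition~\ref{dual}(ii), with $C_b(\cx)$ in place of $L^\infty(\cx)$ by Remark~\ref{dualremark}) and then invoke Sion's minimax theorem exactly as in the proof of the duality formula for the generalized KL divergence. First I would record the convenient symmetric reformulation
$$ V(\mu,\nu)=\tfrac12\sup_{\vp\in C_b(\cx),\,\Vert\vp\Vert_\infty\le 1}\bigl(E_\mu[\vp]-E_\nu[\vp]\bigr), $$
which is valid because the family $\{\vp:\Vert\vp\Vert_\infty\le 1\}$ is invariant under $\vp\mapsto-\vp$, so the supremum of $E_\mu[\vp]-E_\nu[\vp]$ equals the supremum of its absolute value.

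Next, fix $\mu\in\cp$ and consider $g(\vp,\nu)=E_\mu[\vp]-E_\nu[\vp]$ on $\{\vp\in C_b(\cx):\Vert\vp\Vert_\infty\le 1\}\times\cq$. This map is affine (hence quasi-concave) in $\vp$ and affine (hence quasi-convex) in $\nu$; the first set is convex, and equipping $C_b(\cx)$ with the sup-norm topology makes $\vp\mapsto E_\mu[\vp]$ $1$-Lipschitz since $|E_\mu[\vp]-E_\mu[\psi]|\le\Vert\vp-\psi\Vert_\infty$, so $g(\cdot,\nu)$ is continuous, in particular upper semicontinuous; meanwhile $\cq$ is convex and weakly compact and $\nu\mapsto E_\nu[\vp]$ is weakly continuous for $\vp\in C_b(\cx)$, so $g(\vp,\cdot)$ is lower semicontinuous on $\cq$. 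Hence Sion's minimax theorem (Corollary~3.3 in \cite{sion1958general}) yields
$$ \inf_{\nu\in\cq}\sup_{\Vert\vp\Vert_\infty\le 1}\bigl(E_\mu[\vp]-E_\nu[\vp]\bigr)=\sup_{\Vert\vp\Vert_\infty\le 1}\inf_{\nu\in\cq}\bigl(E_\mu[\vp]-E_\nu[\vp]\bigr)=\sup_{\Vert\vp\Vert_\infty\le 1}\bigl(E_\mu[\vp]-\be^{\cq}[\vp]\bigr). $$
Taking the supremum over $\mu\in\cp$ and interchanging it with the supremum over $\vp$ (both are suprema, so this is automatic) gives $\sup_{\mu\in\cp}\inf_{\nu\in\cq}V(\mu,\nu)=\tfrac12\sup_{\Vert\vp\Vert_\infty\le 1}\bigl(\be^{\cp}[\vp]-\be^{\cq}[\vp]\bigr)$, and by the same argument with the roles of $\cp$ and $\cq$ exchanged, $\sup_{\nu\in\cq}\inf_{\mu\in\cp}V(\mu,\nu)=\tfrac12\sup_{\Vert\vp\Vert_\infty\le 1}\bigl(\be^{\cq}[\vp]-\be^{\cp}[\vp]\bigr)$.

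Finally I would combine the two identities. Since $\max(\sup_x f(x),\sup_x h(x))=\sup_x\max(f(x),h(x))$ and $\max(t,-t)=|t|$, taking the maximum of the two displayed quantities produces
$$ V(\cp,\cq)=\tfrac12\sup_{\Vert\vp\Vert_\infty\le 1}\max\bigl(\be^{\cp}[\vp]-\be^{\cq}[\vp],\,\be^{\cq}[\vp]-\be^{\cp}[\vp]\bigr)=\tfrac12\sup_{\vp\in C_b(\cx),\,\Vert\vp\Vert_\infty\le 1}\bigl|\be^{\cp}[\vp]-\be^{\cq}[\vp]\bigr|, $$
which is the claim; note that all quantities are bounded by $1$, so there is no issue with infinite values. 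The only delicate point is the application of the minimax theorem -- choosing topologies so that $g$ is upper semicontinuous in $\vp$ and lower semicontinuous in $\nu$ while keeping $\cq$ compact and $g$ affine in each argument -- but this is precisely the verification already carried out in the proof of the KL duality formula, so it transfers with only cosmetic changes; the rest is bookkeeping of suprema and infima.
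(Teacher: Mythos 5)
Your proposal is correct and follows essentially the same route as the paper: reduce to the two-measure duality formula, apply Sion's minimax theorem to swap $\inf_{\nu\in\cq}$ with $\sup_{\vp}$, repeat with the roles of $\cp$ and $\cq$ exchanged, and take the maximum to recover the absolute value. The only difference is that you spell out the verification of Sion's hypotheses (affineness, the relevant semicontinuity, and compactness of $\cq$), which the paper leaves implicit.
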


\begin{proof}
    By the minimax theorem (see Corollary 3.3 in \cite{sion1958general}), we have 
    \begin{align*}
        \sup_{\mu\in\cp}\inf_{\nu\in\cq}V(\mu,\nu)&=\frac{1}{2}\sup_{\mu\in\cp}\inf_{\nu\in\cq}\sup_{\vp\in\mathcal{C}_b(\cx),\Vert \vp\Vert_\infty\leq 1}(E_\mu[\vp]-E_\nu[\vp]) \\
        &=\frac{1}{2}\sup_{\mu\in\cp}\sup_{\vp\in\mathcal{C}_b(\cx),\Vert \vp\Vert_\infty\leq 1}\inf_{\nu\in\cq}(E_\mu[\vp]-E_\nu[\vp]) \\
        &=\frac{1}{2}\sup_{\vp\in\mathcal{C}_b(\cx),\Vert \vp\Vert_\infty\leq 1}(\be^{\cp}[\vp]-\be^{\cq}[\vp]).
    \end{align*}
    Similarly, 
    $\sup_{\nu\in\cq}\inf_{\mu\in\cp}V(\mu,\nu)=\frac{1}{2}\sup_{\vp\in\mathcal{C}_b(\cx),\Vert \vp\Vert_\infty\leq 1}(\be^{\cq}[\vp]-\be^{\cp}[\vp]).$
    Thus, 
    \begin{align*}
        V(\cp,\cq)&=\max\lt(\sup_{\mu\in\cp}\inf_{\nu\in\cq}V(\mu,\nu),    
        \sup_{\nu\in\cq}\inf_{\mu\in\cp}V(\mu,\nu)\rt) \\
        &=\frac{1}{2}\sup_{\vp\in\mathcal{C}_b(\cx),\Vert \vp\Vert_\infty\leq 1}\lt\vert \be^{\cp}[\vp]-\be^{\cq}[\vp]\rt\vert.  
    \end{align*}
\end{proof}

\begin{theorem}
    Let $\cp, \cp_n\in\overline{\cp}(\cx)$ for $n\geq 1$.
    If $\lim_{n\to\infty}V(\cp_n,\cp)=0$, then $\be^{\cp_n}[\cdot]$ converges weakly to $\be^{\cp}[\cdot]$.
\end{theorem}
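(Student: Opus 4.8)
The plan is to use the duality formula for the generalized total variation metric just established, together with the definition of weak convergence under sublinear expectations. Since $\lim_{n\to\infty}V(\cp_n,\cp)=0$, the duality formula gives
\[
\lim_{n\to\infty}\sup_{\vp\in C_b(\cx),\,\Vert\vp\Vert_\infty\leq 1}\lt\vert\be^{\cp_n}[\vp]-\be^{\cp}[\vp]\rt\vert=0,
\]
so in particular $\be^{\cp_n}[\vp]\to\be^{\cp}[\vp]$ for every $\vp\in C_b(\cx)$ with $\Vert\vp\Vert_\infty\leq 1$. Thus the convergence $\be^{\cp_n}[\vp]\to\be^{\cp}[\vp]$ holds uniformly over the unit ball of $C_b(\cx)$, which is a priori stronger than what weak convergence requires for bounded test functions of unit norm.

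The only remaining point is to remove the norm restriction $\Vert\vp\Vert_\infty\leq 1$, i.e. to pass from test functions in the unit ball to arbitrary $\vp\in C_b(\cx)$. This is a routine scaling argument: for $\vp\in C_b(\cx)$ with $\Vert\vp\Vert_\infty=M$ (the case $M=0$ being trivial), write $\vp=M\psi$ where $\psi=\vp/M$ satisfies $\Vert\psi\Vert_\infty\leq 1$, and use the positive homogeneity of the sublinear expectation, $\be^{\cp_n}[M\psi]=M\,\be^{\cp_n}[\psi]$ and $\be^{\cp}[M\psi]=M\,\be^{\cp}[\psi]$, to conclude
\[
\lt\vert\be^{\cp_n}[\vp]-\be^{\cp}[\vp]\rt\vert=M\lt\vert\be^{\cp_n}[\psi]-\be^{\cp}[\psi]\rt\vert\leq 2M\,V(\cp_n,\cp)\longrightarrow 0.
\]
Hence $\lim_{n\to\infty}\be^{\cp_n}[\vp]=\be^{\cp}[\vp]$ for every $\vp\in C_b(\cx)$, which is exactly the definition of weak convergence of $\{\be^{\cp_n}[\cdot]\}$ to $\be^{\cp}[\cdot]$.

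There is essentially no obstacle here; the substantive work was already done in proving the duality formula for $V(\cp,\cq)$ via the minimax theorem. The one small care to take is that positive homogeneity of $\be^{\cp}[\cdot]$ (for nonnegative scalars) is exactly what is needed for the scaling step, and this is immediate from the definition $\be^{\cp}[X]=\sup_{\mu\in\cp}E_\mu[X]$. It is also worth remarking that this theorem, combined with the earlier results, shows $\overline{\text{KL}}$-convergence $\Rightarrow$ $V$-convergence $\Leftrightarrow$ $\text{JS}$-convergence $\Rightarrow$ weak convergence, giving a full hierarchy of the convergence notions introduced in the paper.
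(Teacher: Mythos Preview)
Your proof is correct and is essentially identical to the paper's own argument: the paper also normalizes an arbitrary $\vp\in C_b(\cx)$ by $M_\vp=\Vert\vp\Vert_\infty$, invokes positive homogeneity to write $\lt\vert\be^{\cp_n}[\vp]-\be^{\cp}[\vp]\rt\vert=M_\vp\lt\vert\be^{\cp_n}[\vp/M_\vp]-\be^{\cp}[\vp/M_\vp]\rt\vert$, and bounds this by $2M_\vp\,V(\cp_n,\cp)$ via the duality formula. Your presentation is just slightly more expansive in separating the ``unit ball'' step from the scaling step, but the content is the same.
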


\begin{proof}
    For each $\vp\in\mathcal{C}_b(\cx)$, let $M_\vp=\sup_{x\in\cx}|\vp(x)|<+\infty$, we have 
    \begin{align*}
        \lt\vert \be^{\cp_n}[\vp]-\be^\cp[\vp]\rt\vert&=M_\vp\lt\vert \be^{\cp_n}[\vp/M_\vp]-\be^\cp[\vp/M_\vp]\rt\vert\leq 2M_\vp V(\cp_n,\cp).
    \end{align*}
    Let $n\to\infty$, we can deduce that $\lim_{n\to\infty}\be^{\cp_n}[\vp]=\be^\cp[\vp].$
   
\end{proof}

\begin{theorem}
    Let $\cp, \cp_n\in\overline{\cp}(\cx)$ for $n\geq 1$. Let $D=\sup_{x,y\in\cx}d(x,y)$, if $\lim_{n\to\infty}V(\cp_n,\cp)=0$ and $D<+\infty$, then $\lim_{n\to\infty}\mathcal{W}_1(\cp_n,\cp)=0.$
\end{theorem}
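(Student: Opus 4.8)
The plan is to reduce the statement to the classical measure-level comparison $W_1(\mu,\nu)\le D\,V(\mu,\nu)$ for probability measures on the bounded Polish space $\cx$, and then lift this inequality to the set level by a routine monotonicity manipulation of the $\sup\inf$ quantities defining $\mathcal{W}_1(\cdot,\cdot)$ and $V(\cdot,\cdot)$.

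For the measure-level step, I would use a maximal coupling. Since $\cx$ is Polish, for any probability measures $\mu,\nu$ on $\cx$ there is a coupling $(X,Y)$ with $\text{law}(X)=\mu$, $\text{law}(Y)=\nu$ and $\mathbb{P}(X\neq Y)=V(\mu,\nu)$, recalling that $V(\mu,\nu)=\sup_{A\in\cb(\cx)}|\mu(A)-\nu(A)|$ is precisely the half-normalised total variation. Since $d(X,Y)$ vanishes on $\{X=Y\}$ and is bounded by $D$ on its complement,
\[
W_1(\mu,\nu)\le\be\big[d(X,Y)\big]=\be\big[d(X,Y)\,\mathbf{1}_{\{X\neq Y\}}\big]\le D\,\mathbb{P}(X\neq Y)=D\,V(\mu,\nu).
\]
An alternative route avoiding the maximal coupling: by Kantorovich--Rubinstein duality $W_1(\mu,\nu)=\sup\{|E_\mu[f]-E_\nu[f]|:\mathrm{Lip}(f)\le1\}$, and a $1$-Lipschitz $f$ on a set of diameter $D$ satisfies $\sup f-\inf f\le D$, so it may be recentred to have $\|f\|_\infty\le D/2$ without altering $E_\mu[f]-E_\nu[f]$; then Proposition \ref{dual}(ii), together with Remark \ref{dualremark}, yields $|E_\mu[f]-E_\nu[f]|\le2\|f\|_\infty\,V(\mu,\nu)\le D\,V(\mu,\nu)$.

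For the lifting step, fix $n$. For every $\mu\in\cp_n$ and $\nu\in\cp$ we have $W_1(\mu,\nu)\le D\,V(\mu,\nu)$; taking $\inf_{\nu\in\cp}$ and then $\sup_{\mu\in\cp_n}$ gives
\[
\sup_{\mu\in\cp_n}\inf_{\nu\in\cp}W_1(\mu,\nu)\le D\sup_{\mu\in\cp_n}\inf_{\nu\in\cp}V(\mu,\nu)\le D\,V(\cp_n,\cp),
\]
and symmetrically $\sup_{\nu\in\cp}\inf_{\mu\in\cp_n}W_1(\mu,\nu)\le D\,V(\cp_n,\cp)$. Taking the maximum of the two yields the quantitative bound $\mathcal{W}_1(\cp_n,\cp)\le D\,V(\cp_n,\cp)$, and since $D<+\infty$ and $V(\cp_n,\cp)\to0$ by hypothesis, we conclude $\lim_{n\to\infty}\mathcal{W}_1(\cp_n,\cp)=0$.

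The only genuinely delicate ingredient is the measure-level bound $W_1\le D\cdot V$: in the coupling approach one must invoke the existence of a maximal coupling on a general Polish space (standard, e.g. via Dobrushin's theorem), while in the duality approach one must check that the recentring is legitimate and that the $C_b$-versus-measurable distinction in Proposition \ref{dual} is immaterial here. After that, everything is formal monotonicity of $\sup$ and $\inf$, so I do not expect any further obstacle; the argument moreover gives the explicit Lipschitz-type estimate $\mathcal{W}_1(\cp_n,\cp)\le D\,V(\cp_n,\cp)$.
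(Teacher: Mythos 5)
Your proof is correct and follows essentially the same route as the paper: establish the measure-level bound $W_1(\mu,\nu)\le D\,V(\mu,\nu)$ and then lift it to the set level by monotonicity of the $\sup\inf$ quantities, yielding $\mathcal{W}_1(\cp_n,\cp)\le D\,V(\cp_n,\cp)$. The only difference is that the paper cites this measure-level inequality directly from Villani (Particular Case 6.16), whereas you supply a proof of it (via maximal coupling or Kantorovich--Rubinstein duality) and make explicit the lifting step that the paper leaves implicit.
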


\begin{proof}
   For two probability measures $\mu$ and $\nu$ on a metric space $(\cx,\cb(\cx))$, by Particular Case 6.16 in \cite{villani2008optimal},  we have 
   $$W_1(\mu,\nu)\leq D\cdot V(\mu,\nu).$$ 
   According to the definition of the generalized total variation metric and the Wasserstein metric, $\lim_{n\to\infty}\mathcal{W}_1(\cp_n,\cp)=0$ holds.
\end{proof}

\begin{corollary}
    Let $\cp, \cp_n\in\overline{\cp}(\cx)$ for $n\geq 1$ and $\lim_{n\to\infty}\normalfont \overline{\text{KL}}(\cp_n,\cp)=0$. Then we have
    \begin{itemize}
        \item [(i)] $\be^{\cp_n}[\cdot]$ converges weakly to $\be^{\cp}[\cdot]$.
        \item [(ii)] If $D=\sup_{x,y\in\cx}d(x,y)<+\infty$, then $\lim_{n\to\infty}\mathcal{W}_1(\cp_n,\cp)=0$.
    \end{itemize}
\end{corollary}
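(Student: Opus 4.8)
The plan is to obtain both conclusions by chaining the convergence results already established in this section: control of $\overline{\text{KL}}(\cp_n,\cp)$ yields control of the generalized total variation metric $V(\cp_n,\cp)$ via the generalized Pinsker inequality, and the latter is strong enough to imply both the weak convergence of the sublinear expectations and convergence in the $1$-Wasserstein metric. Thus the whole argument is a concatenation of three previously proved theorems, and the only thing to check is that the hypotheses $\cp,\cp_n\in\overline{\cp}(\cx)$ (and, for (ii), $D<+\infty$) match those of the invoked statements, which they do verbatim.

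First I would use the bound following from the generalized Pinsker's inequality,
$$V(\cp_n,\cp)\leq\sqrt{\tfrac12\max\{\text{KL}(\cp_n\parallel\cp),\text{KL}(\cp\parallel\cp_n)\}}\leq\sqrt{\tfrac12\,\overline{\text{KL}}(\cp_n,\cp)},$$
so that the hypothesis $\overline{\text{KL}}(\cp_n,\cp)\to0$ forces $\lim_{n\to\infty}V(\cp_n,\cp)=0$; this is exactly the content of the earlier theorem that deduces $V$-convergence from $\overline{\text{KL}}$-convergence. For part (i), with $V(\cp_n,\cp)\to0$ in hand, I would apply the theorem stating that $V$-convergence implies weak convergence of sublinear expectations: for each $\vp\in C_b(\cx)$, writing $M_\vp=\sup_{x\in\cx}|\vp(x)|$, one has $|\be^{\cp_n}[\vp]-\be^{\cp}[\vp]|\leq 2M_\vp\,V(\cp_n,\cp)\to0$, hence $\be^{\cp_n}[\cdot]$ converges weakly to $\be^{\cp}[\cdot]$.

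For part (ii), under the additional assumption $D=\sup_{x,y\in\cx}d(x,y)<+\infty$, I would invoke the theorem asserting that $V$-convergence implies $\mathcal{W}_1$-convergence in the finite-diameter case; this rests on the pointwise estimate $W_1(\mu,\nu)\leq D\cdot V(\mu,\nu)$ together with the definitions of the generalized total variation and Wasserstein metrics. Combining it with $V(\cp_n,\cp)\to0$ from the first step yields $\lim_{n\to\infty}\mathcal{W}_1(\cp_n,\cp)=0$.

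Since every step is a direct appeal to an already established theorem, I do not expect any genuine obstacle; the only mild care needed is to apply the generalized Pinsker inequality first so that both (i) and (ii) are reduced to the single intermediate fact $V(\cp_n,\cp)\to0$.
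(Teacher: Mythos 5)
Your proposal is correct and follows exactly the chain of results the paper intends for this corollary: the generalized Pinsker inequality gives $V(\cp_n,\cp)\to0$, and the two preceding theorems then yield (i) and (ii) respectively. Nothing further is needed.
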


%% If you have bib database file and want bibtex to generate the
%% bibitems, please use
%%
%%  \bibliographystyle{elsarticle-num-names} 
%%  \bibliography{<your bibdatabase>}

%% else use the following coding to input the bibitems directly in the
%% TeX file.

%% Refer following link for more details about bibliography and citations.
%% https://en.wikibooks.org/wiki/LaTeX/Bibliography_Management

%\begin{thebibliography}{00}

%% For authoryear reference style
%% \bibitem[Author(year)]{label}
%% Text of bibliographic item
\bibliographystyle{elsarticle-harv} 
\bibliography{ref}

%\end{thebibliography}
\end{document}